\documentclass[english]{amsart}

\usepackage{babel}
\usepackage{amsmath}
\usepackage{amsthm}
\usepackage{amssymb}
\usepackage[utf8]{inputenc}
\usepackage{graphicx}
\usepackage{float}
\usepackage[colorlinks = true, urlcolor = blue, linkcolor = blue, citecolor = blue, breaklinks = true]{hyperref}
\usepackage{breakcites}
\usepackage[dvipsnames]{xcolor}
\usepackage{tikz-cd}
\usetikzlibrary{spath3, intersections}
\usetikzlibrary{
  knots,
  hobby,
  decorations.pathreplacing,
  shapes.geometric,
  calc
}

\tikzset{
  knot diagram/every strand/.append style={
    ultra thick,
    red
  },
  show curve controls/.style={
    postaction=decorate,
    decoration={show path construction,
      curveto code={
        \draw [blue, dashed]
        (\tikzinputsegmentfirst) -- (\tikzinputsegmentsupporta)
        node [at end, draw, solid, red, inner sep=2pt]{};
        \draw [blue, dashed]
        (\tikzinputsegmentsupportb) -- (\tikzinputsegmentlast)
        node [at start, draw, solid, red, inner sep=2pt]{}
        node [at end, fill, blue, ellipse, inner sep=2pt]{}
        ;
      }
    }
  },
  show curve endpoints/.style={
    postaction=decorate,
    decoration={show path construction,
      curveto code={
        \node [fill, blue, ellipse, inner sep=2pt] at (\tikzinputsegmentlast) {}
        ;
      }
    }
  }
}
\usepackage{url}
\usepackage{dcolumn}
\usepackage{xcolor}
\usepackage{tikz}
\usetikzlibrary{decorations.markings}
\usepackage{mathtools}
\usepackage{nicematrix}
\usepackage{mathrsfs}
\usepackage{booktabs}
\usepackage{pdfpages}
\usepackage{bbm}
\usepackage{enumitem}
\usepackage{scalerel}
\usetikzlibrary{matrix,arrows,chains}
\newtheorem*{theorem*}{Theorem}
\newtheorem{prop}{Proposition}[section]
\newtheorem{thm}[prop]{Theorem}
\newtheorem*{thm*}{Theorem}
\newtheorem{cor}[prop]{Corollary}
\newtheorem{lem}[prop]{Lemma}
\newtheorem{conj}[prop]{Conjecture}
\theoremstyle{definition}
\newtheorem{rem}[prop]{Remark}
\newtheorem{de}[prop]{Definition}
\newtheorem{ex}[prop]{Example}

\newtheorem*{quest*}{Question}
\newtheorem*{cor*}{Corollary}
\newtheorem*{conj*}{Conjecture}

\usepackage[left=3cm,right=3cm]{geometry}
\usepackage{stackengine,wasysym}

\makeatletter
\tikzcdset{
  open/.code     = {\tikzcdset{hook, circled};},
  closed/.code   = {\tikzcdset{hook, slashed};},
  open'/.code    = {\tikzcdset{hook', circled};},
  closed'/.code  = {\tikzcdset{hook', slashed};},
  circled/.code  = {\tikzcdset{markwith = {\draw (0,0) circle (.375ex);}};},
  slashed/.code  = {\tikzcdset{markwith = {\draw[-] (-.0ex,-.4ex) -- (.0ex,.4ex);}};},
  markwith/.code ={
    \pgfutil@ifundefined%
    {tikz@library@decorations.markings@loaded}%
    {\pgfutil@packageerror{tikz-cd}{You need to say %
      \string\usetikzlibrary{decorations.markings} to use arrows with markings}{}}{}%
    \pgfkeysalso{/tikz/postaction = {
      /tikz/decorate,
      /tikz/decoration={markings, mark = at position 0.5 with {#1}}}
    }
  },
}
\makeatother

\DeclareMathOperator{\Z}{\mathbb{Z}}

\DeclareMathOperator{\FinSet}{FinSet}

\DeclareMathOperator{\id}{id}

\DeclareMathOperator{\cl}{cl}

\DeclareMathOperator{\sgn}{sgn}

\DeclareMathOperator{\ab}{ab}

\DeclareMathOperator{\Fun}{Fun}

\DeclareMathOperator{\Aut}{Aut}

\DeclareMathOperator{\im}{im}

\renewcommand{\ker}{\kerr}
\DeclareMathOperator{\kerr}{ker}
\DeclareMathOperator{\coker}{coker}
\renewcommand{\lim}{\limi}
\DeclareMathOperator{\limi}{lim}

\DeclareMathOperator{\copro}{\scaleobj{0.8}{\amalg}}

\DeclareMathOperator{\Mat}{Mat}

\DeclareMathOperator{\rk}{rk}

\DeclareMathOperator{\HH}{H}

\newcommand{\vdotsc}{%
  \vcenter{\hbox{\rotatebox{90}{$\dotsc$}}}%
}

\newcommand{\ddotsc}{%
  \vcenter{\hbox{\rotatebox{-45}{$\dotsc$}}}%
}

\graphicspath{ {./Bilder/} }
\tikzset{double line with arrow/.style args={#1,#2}{decorate,decoration={markings,%
mark=at position 0 with {\coordinate (ta-base-1) at (0,1pt);
\coordinate (ta-base-2) at (0,-1pt);},
mark=at position 1 with {\draw[#1] (ta-base-1) -- (0,1pt);
\draw[#2] (ta-base-2) -- (0,-1pt);
}}}}

\tikzset{%
    symbol/.style={%
        draw=none,
        every to/.append style={%
            edge node={node [sloped, allow upside down, auto=false]{$#1$}}}
    }
}

\tikzset{
  open/.style = {decoration = {markings, mark = at position 0.5 with { \node[transform shape, scale = .8] {$\circ$}; } }, postaction = {decorate} }
}

\begin{document}

\title{$K_1$ of Matroids}

\author[Wittich]{Thor Wittich}
\address[Wittich]{
Department of Mathematics, Universit\"{a}t Osnabr\"{u}ck, 
Albrechtstrasse 28a, 49076 Osnabr\"{u}ck,
Germany}
\email{thor.wittich@uni-osnabrueck.de}

\begin{abstract}
Using the geometric realization spectral sequence and the Serre spectral sequence, we compute $K_1(\Mat_*)$, the first K-theory group of the proto-exact category of pointed matroids, answering a question of Jun, Sistko and Wright.
\end{abstract}
\maketitle

{ \hypersetup{hidelinks} \tableofcontents }

\section{Introduction}
Matroids are combinatorial objects which abstract and encode the notion of independence, including linear independence within vector spaces and the absence of cycles in a finite graph. As such, they play a prominent role in various fields ranging from combinatorics and discrete mathematics to optimization and algebra. Also in the field of tropical geometry, which studies algebraic varieties from a combinatorial/linearized perspective, matroids are ubiquitous. They have in particular been used by Engel, de Gaay Fortman and Schreieder in \cite{EGS25} to obtain interesting results in algebraic geometry, such as the stable irrationality of very general cubic threefolds and the failure of the integral Hodge conjecture for very general principally polarized abelian varieties of dimension at least $4$.

\hfill

Algebraic $K$-theory is a powerful tool from homotopy theory which, in its simplest description, studies decompositions of objects. In recent years, the application of this tool to combinatorial objects has received increasing attention. The starting point of this was Zakharevich’s work on the scissors congruence $K$-theory of assemblers \cite{zbMATH06043101} and \cite{zbMATH06642280}. Her constructions associate $K$-theory spectra and hence $K$-theory groups to suitable categories equipped with scissors congruence/cutting and pasting operations, recovering, for instance, classical scissors congruence groups and the Grothendieck ring of varieties from algebraic geometry. The higher $K$-groups provide further invariants of decomposition problems and are an active field of study. For example, recent work of Kupers, Lemann, Malkiewich, Miller and Sroka \cite{arXiv:2408.08081} relates scissors congruence $K$-theory to the homology of scissors automorphism groups, providing a group-theoretic interpretation of these higher invariants. The authors describe $K_1$ as the abelianization of scissors automorphism groups and as such further illustrate how combinatorial $K$-theory connects decomposition relations with the symmetries which preserve them.

\hfill

The $K$-theory of the category of pointed matroids $\Mat_*$, as first considered in \cite{zbMATH07242437}, is part of this development. The relevant decompositions stem from restrictions $\!\left.M\right|\!A$ of a matroid $M$ together with their complementary contractions $M/A$. In \cite{zbMATH07242437}, the authors show that these notions equip the category $\Mat_*$ with the structure of a proto-exact category, which is a not necessarily additive variant of exact categories. By \cite{zbMATH07103772}, this is one of the frameworks which enable $K$-theory. The resulting algebraic $K$-theory $K(\Mat_*)$ of pointed matroids hence studies and encodes restriction-contraction decompositions of matroids.

\hfill

Currently, not much about the homotopy groups of $K(\Mat_*)$, that is, about the $K$-groups of pointed matroids, is known. Eppolito, Jun and Szczesny showed in \cite{zbMATH07242437} that $K_0(\Mat_*) \cong \Z^2$ and that the stable homotopy groups of spheres embed into the $K$-groups: $\pi_n(\mathbb{S}) \hookrightarrow K_n(\Mat_*)$. In particular, the $K$-groups contain interesting information of a homotopical nature, but how much more information do the $K$-groups contain? This is exactly what Jun, Sistko and Wright ask in Remark 5.6 of \cite{zbMATH08208138}. Our main result answers this question in degree one:
\begin{theorem*}[Theorem \ref{main result}]
There is an isomorphism $K_1(\Mat_*) \cong \mathbb{Z}/2\mathbb{Z} \cong \pi_1(\mathbb{S})$.
\end{theorem*}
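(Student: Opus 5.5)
The approach is to compute $K_1(\Mat_*)$ as $\pi_2$ of the $Q$-construction, $K_1(\Mat_*) \cong \pi_2(BQ\Mat_*)$, using the Hurewicz theorem together with a spectral sequence computation. Since $BQ\Mat_*$ is the first space of a connective spectrum (the $K$-theory spectrum of a proto-exact category, as in \cite{zbMATH07103772}), it is an infinite loop space up to the deloop. Hence $\pi_1(BQ\Mat_*) \cong K_0(\Mat_*) \cong \Z^2$ acts trivially on $\pi_2$. We would compute $H_*(\Omega BQ\Mat_*)$, or more directly $\pi_2$ of the simplicial space given by the nerve of $Q\Mat_*$, via the geometric realization spectral sequence
\[E^1_{p,q} = H_q(N_p Q\Mat_*^{\cong}) \Rightarrow H_{p+q}(BQ\Mat_*).\]
Here we realize levelwise in the direction of morphisms and use the groupoid of isomorphisms in each simplicial degree, as in Waldhausen's $S_\bullet$-construction. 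Concretely, we would use $|S_\bullet \Mat_*|$, whose $p$-th level is the groupoid of flags $M_1 \hookrightarrow \dots \hookrightarrow M_p$ of pointed matroids. Each such groupoid decomposes as a disjoint union over isomorphism classes of classifying spaces of automorphism groups, which are finite groups of matroid automorphisms.

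\textbf{Step 1: the low-degree terms of the $E^1$-page.} The relevant entries are $E^1_{p,0} = \Z[\text{iso classes of flags}]$ and $E^1_{p,1} = \bigoplus \Aut(\text{flag})^{\ab}$. The rows of degree $q=0$ give a chain complex whose homology should be $H_*$ of a combinatorial space. In degree $1$ this recovers $K_0 \cong \Z^2$, generated by the loop (rank $0$) and the coloop (rank $1$) on a single element. Degree $2$ should be computable through explicit relations coming from $2$-step filtrations.

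\textbf{Step 2: determine $\pi_2$ from homology.} Using $\pi_1 = \Z^2$, we would pass to the universal cover, or use the Serre spectral sequence of the fibration $\widetilde{X} \to X \to B\Z^2 = T^2$ with $X = |S_\bullet\Mat_*|$. Since the action is trivial and $H_*(T^2)$ is known, we get $\pi_2(X) = H_2(\widetilde X)$. The Serre spectral sequence, with $E^2 = H_p(T^2; H_q(\widetilde X))$, then relates $H_2(X)$, $H_2(T^2) = \Z$ and $H_2(\widetilde X)$ through the differential $d_2 \colon E^2_{2,1} \to E^2_{0,2}$ and the edge maps. Thus we need $H_1(\widetilde X) = 0$, which is immediate, and an exact computation of $H_2(X)$.

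\textbf{Step 3: lower bound.} The lower bound comes from the embedding $\pi_1(\mathbb S) \hookrightarrow K_1(\Mat_*)$ of \cite{zbMATH07242437}, which gives $\Z/2 \subseteq K_1$, realized by the swap automorphism of the rank-$0$ matroid on two elements. So it suffices to show $|K_1| \le 2$, that is, $H_2(X) \cong \Z \oplus \Z/2$ modulo the torus class.

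\textbf{Main obstacle.} The main obstacle is the computation of $H_2(X)$ in Step 2. This requires controlling $E^1_{1,1} = \bigoplus_M \Aut(M)^{\ab}$ over all pointed matroids $M$, and showing that the differential $d^1 \colon E^1_{2,1} \to E^1_{1,1}$, together with the $E^2_{3,0}$ contribution, kills everything except one $\Z/2$. Our first attempt would be a transposition argument. Every automorphism of a matroid is a permutation of the ground set, and its abelianization class should be expressible via $2$-flags as a sum of transpositions on rank-$0$ or rank-$1$ uniform pieces; restriction-contraction flags then identify all these transpositions with a single sign class. The second ingredient is to show that $E^1_{p,0}$ is acyclic in degrees $2$ and $3$ modulo the torus. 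For this we would exhibit explicit contracting relations among flags, using direct sums with loops and coloops.
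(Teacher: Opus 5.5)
Your architecture is the paper's: the geometric realization spectral sequence for $X=|BS_\bullet\Mat_*|$, the Serre spectral sequence of $\widetilde X\to X\to B\pi_1(X)\simeq T^2$ with trivial $\pi_1$-action giving $0\to\pi_2(X)\to H_2(X)\to\Z\to 0$, and the lower bound from $\pi_1(\mathbb{S})\hookrightarrow K_1(\Mat_*)$. The problem is the upper bound $\lvert K_1(\Mat_*)\rvert\le 2$, which is the actual content. You leave it as the ``main obstacle'', and your plan for $E^2_{1,1}$ does not work as stated. A transposition of the ground set is usually not an automorphism of $M$. The only relations available are $[M,\alpha]=[M|A,\alpha|_A]+[M/A,\overline{\alpha}]$ for $\alpha$-invariant $A$. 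Applied to $M$ itself, these only split $\alpha$ along its orbits. That leaves classes $[N,c]$ with $c$ a single cycle on the ground set of a matroid $N$ that has no $c$-invariant proper subsets and whose automorphism group need not contain any transposition.

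The missing idea is a way to change the matroid while keeping the permutation. The paper extends $\alpha$ to the free coextension $M\times t$ by fixing $t$. The two flags $\{t\}\subset M\times t$ and $\widetilde E(M)\subset M\times t$ then give $[M,\alpha]=[(M\times t)|\widetilde E(M),\beta|_{\widetilde E(M)}]$, on the same ground set with rank one higher. Iterating reaches the free matroid, where $\Aut=S_n$. There every transposition reduces to $[U_{2,2},(1\,2)]$, and the sign homomorphism detects it. Your ``rank-$0$ pieces'' variant would need the dual device, free extension plus truncation, which you do not supply.

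The second gap is $E^2_{2,0}$, which must be exactly $\Z$. Nothing can hit it, and $d^2_{2,0}$ lands in $E^2_{0,1}=0$. So any extra torsion or rank there survives to $H_2(X)$, and after cancelling against $H_2(T^2)\cong\Z$ it would enlarge $\pi_2(X)$. ``Contracting relations using loops and coloops'' is not a computation. The paper shows that every cycle of proper flags is, modulo $\im(d^1_{3,0})$, a multiple of $x_0=[U_{0,1}\oplus U_{1,1};\{f\}]-[U_{0,1}\oplus U_{1,1};\{e\}]$. The tool is the ordered-contraction map $z$:
\begin{itemize}
\item it is independent of the ordering modulo $\im(d^1_{3,0})+\Z\cdot x_0$, because adjacent swaps only see a two-element minor, and among those only $U_{0,1}\oplus U_{1,1}$ lacks the swap symmetry;
\item it satisfies $[M;A]\equiv z(M)-z(M|A)-z(M/A)$.
\end{itemize}
The homomorphism $[M;A]\mapsto \rk_M(A)\cdot\lvert\widetilde E(M)\setminus A\rvert$ kills $\im(d^1_{3,0})$ and sends $x_0$ to $1$, so $E^2_{2,0}\cong\Z$.

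Conversely, you do not need $E^2_{3,0}$ at all. The map $d^2_{3,0}$ can only shrink $E_{1,1}$, and your lower bound handles the rest. This gives $\coker(d^2_{3,0})\oplus\Z\cong H_2(X)\cong\pi_2(X)\oplus\Z$, and cancellation yields $\pi_2(X)\cong\mathbb{Z}/2\mathbb{Z}$. A minor point: in the Serre spectral sequence $E^2_{2,1}=H_2(T^2;H_1(\widetilde X))=0$, so the differential you name is zero. What actually matters is $H_3(T^2)=0$ for $d^3_{3,0}$.
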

In particular, the only information comes from the first stable homotopy group of spheres.

\hfill

Our main tool is the geometric realization spectral sequence for homology. It expresses the relevant homology groups in terms of automorphisms of matroids and their flags, and we use two key ideas to get a hold of these groups. The first one is that we can use so-called free coextensions of matroids to raise the rank of a matroid without changing its corresponding automorphism class in the relevant term of the spectral sequence. This allows us to assume that all pointed matroids are free and hence that they are basically nothing but finite pointed sets. The second one is to use suitable successive decompositions into matroids with one element and then to study these decompositions into simple pieces to find generators. This allows us to compute the second homology of the delooped K-theory space of pointed matroids. To put everything together, we use the Serre spectral sequence, which allows us to relate this homology group to the desired homotopy group, resulting in the above computation. Based on our arguments, we give the following conjectural description of the entire $K$-theory spectrum:
\begin{conj*}[Conjecture \ref{conj}]
The map $\operatorname{K}(\Mat_*) \to \HH\!\Z \vee \, \mathbb{S}$ induced by the rank map and by the forgetful map to pointed finite sets is an equivalence of spectra. In particular, we have isomorphisms $K_n(\Mat_*) \cong \pi_n(\mathbb{S})$ for all positive $n$.
\end{conj*}
This article is structured as follows. Section 2 introduces and collects relevant facts about matroids and their pointed versions. Section 3 briefly reviews proto-exact categories and their K-theory, including the aforementioned description in terms of flags of matroids. Finally, Section 4 is dedicated to the computations with the geometric realization and Serre spectral sequences and concludes with the main result and the  conjectural description from above.

\hfill

\textbf{Acknowledgements.} The author wishes to thank Ming Ng for making him aware of the problem of computing the $K$-theory of matroids, specifically in degree $1$. Furthermore, the author used ChatGPT 6-Astra for proofreading and to search for references. Except for pointing out typos and the like, it found combinatorial slipups in the proof of Proposition \ref{class of flag in terms of z} and fixed them.

\section{The Category of Matroids}
In this section we give a quick introduction to matroids and generally refer the reader to \cite{zbMATH05873618} or \cite{zbMATH03534506} for more information. Despite this, we will provide arguments for a few standard lemmas to get more homotopically minded readers used to the combinatorial notions. 

\hfill

Matroids are combinatorial objects which abstract the notion of linear independence from vector spaces. There are various equivalent definitions, of which we will use and present three. As in \cite{zbMATH07242437}, the one in terms of flats will be the most relevant one for this article.
\begin{de}
A matroid $M$ is a pair $(E,\mathcal{I})$ consisting of a finite set $E(M)$, called the ground set of the matroid $M$, together with a collection $\mathcal{I}(M) \subset \mathcal{P}(E(M))$ of independent subsets subject to:
\begin{enumerate}
\item[(i)] The empty set is independent, i.e., we have $\emptyset \in \mathcal{I}(M)$.
\item[(ii)] The set $\mathcal{I}$ is hereditary, i.e., if $A \in \mathcal{I}(M)$ and $A' \subset A$, then $A' \in \mathcal{I}(M)$.
\item[(iii)] The set $\mathcal{I}(M)$ satisfies the Steinitz exchange property, that is, whenever $A, B \in \mathcal{I}(M)$ with $\lvert A \rvert < \lvert B \rvert$, then there exists an element $e \in B \setminus A$ such that $A \cup \{ e \} \in \mathcal{I}(M)$.
\end{enumerate}
\end{de}
As in linear algebra, any maximally independent set is called a basis of the matroid $M$ and the size of any two bases agrees; see Lemma 1.2.1 of \cite{zbMATH05873618}. This number is called the rank of the matroid $M$ and is denoted by $\rk(M)$. More generally, the rank $\rk_M(A)$ of any subset $A$ of a matroid $M$ is defined as the size of the largest independent set contained in $A$. 

\hfill

Before we get to some central examples, let us introduce two further notions.
\begin{de}
Let $E$ be a finite set. A closure operator on $E$ is a map $\cl \colon \mathcal{P}(E) \to \mathcal{P}(E)$ subject to:
\begin{enumerate}
\item[(i)] The map $\cl$ is extensive, i.e., for all subsets $A \subset E$, we have $A \subset \cl(A)$.
\item[(ii)] The map $\cl$ is idempotent, i.e., we have $\cl(\cl(A)) = \cl(A)$ for all subsets $A \subset E$.
\item[(iii)] The map $\cl$ is monotone, i.e., for all subsets $A,B \subset E$ with $A \subset B$ we have $\cl(A) \subset \cl(B)$.
\end{enumerate}
If additionally for all subsets $A \subset E$ and all $e \in E$, whenever $f \in \cl(A \cup \{ e \}) \setminus \cl(A)$, we have $e \in \cl(A \cup \{ f \}) \setminus \cl(A)$, we say that the closure operator $\cl$ satisfies the Steinitz exchange property (SEP).
\end{de}
To state the definition of flats, let us quickly clarify the notion of cover used here. Given two subsets $A$ and $B$ of some finite set $E$, we say that $B$ covers $A$, if $A \subsetneq B$ and there is no subset $C \subset E$ with $A \subsetneq C \subsetneq B$. In other words, $B$ covers $A$ if $B$ is minimal among subsets of $E$ which properly contain $A$.
\begin{de}
Let $E$ be a finite set. A collection $\mathcal{F} \subset \mathcal{P}(E)$ is a set of flats on $E$, if it satisfies the following axioms:
\begin{enumerate}
\item[(i)] The set $E$ is a flat, i.e., $E$ is contained in $\mathcal{F}$.
\item[(ii)] The set of flats $\mathcal{F}$ is stable under finite intersections, i.e., we have $F \cap F' \in \mathcal{F}$ for all $F,F' \in \mathcal{F}$.
\item[(iii)] For all flats $F \in \mathcal{F}$, the set $\{ F' \setminus F \mid F' \text{ covers } F \text{ within } \mathcal{F} \}$ is a partition of $E \setminus F$.
\end{enumerate}
\end{de}
A basic result of matroid theory now identifies all those notions:
\begin{prop}\label{matroid characterizations}
Given a finite set $E$, the following data are equivalent:
\begin{enumerate}
\item[(i)] A matroid $M$ with ground set $E$;
\item[(ii)] A closure operator $\cl$ on $E$ satisfying the Steinitz exchange property;
\item[(iii)] A set of flats on $E$.
\end{enumerate}
\end{prop}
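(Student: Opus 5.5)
I will prove the equivalences by explicit translations (i)$\to$(ii)$\to$(iii)$\to$(i), checking in each step that the axioms transfer, and then that the composite round trips are identities. From independent sets I define the rank function $\rk_M$ as above and set $\cl(A) = \{ e \in E \mid \rk_M(A \cup \{e\}) = \rk_M(A) \}$. From a closure operator I set $\mathcal{F} = \{ A \subset E \mid \cl(A) = A \}$. From flats I define the closure $\cl(A) = \bigcap_{F \in \mathcal{F},\, F \supset A} F$. Independent sets are recovered as those $I$ with $e \notin \cl(I \setminus \{e\})$ for all $e \in I$.

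\textbf{Step (i)$\Rightarrow$(ii).} I first prove the standard properties of $\rk_M$: it is monotone, unit increasing ($\rk(A) \le \rk(A\cup\{e\}) \le \rk(A)+1$), and submodular. These follow from the exchange property by extending a maximal independent subset of $A \cap B$ to one of $A \cup B$. Extensivity and monotonicity of $\cl$ are then immediate. For idempotence, I show $\rk(\cl(A)) = \rk(A)$ by adding elements one at a time and using submodularity. For the Steinitz exchange property, let $f \in \cl(A\cup\{e\}) \setminus \cl(A)$. Then $\rk(A\cup\{f\}) = \rk(A)+1 = \rk(A\cup\{e\}) = \rk(A \cup\{e,f\})$. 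This gives $e \in \cl(A \cup\{f\})$, and $e \notin \cl(A)$ since otherwise $f \in \cl(A)$.

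\textbf{Step (ii)$\Rightarrow$(iii).} $E$ is closed, and intersections of closed sets are closed by monotonicity. The partition axiom is where the exchange property enters. For a flat $F$ and $e \notin F$, the set $\cl(F\cup\{e\})$ is a flat properly containing $F$. I show it covers $F$: if $F \subsetneq F' \subset \cl(F\cup\{e\})$ with $F'$ closed, choose $f \in F' \setminus F$. Exchange gives $e \in \cl(F\cup\{f\}) \subset F'$, so $F' = \cl(F\cup\{e\})$. Conversely, every flat $F'$ covering $F$ equals $\cl(F \cup \{f\})$ for any $f \in F'\setminus F$. Hence the sets $F' \setminus F$ are exactly the classes of the relation $e \sim f \iff \cl(F\cup\{e\}) = \cl(F\cup\{f\})$ on $E \setminus F$, which form a partition.

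\textbf{Step (iii)$\Rightarrow$(i) and round trips.} This is the step I expect to be the main obstacle. From flats I obtain $\cl$, which is clearly a closure operator, so the work is to recover exchange from the partition axiom. Given $f \in \cl(A\cup\{e\}) \setminus \cl(A)$, put $F = \cl(A)$. By the partition axiom, $e$ lies in a unique cover $F_e$ of $F$, and $F_e = \cl(F \cup \{e\}) = \cl(A\cup\{e\})$; the first equality holds because any flat containing $F\cup\{e\}$ contains some cover containing $e$, which is $F_e$. Since $f \in F_e \setminus F$, uniqueness of the cover containing $f$ gives $F_f = F_e$, so $e \in \cl(A\cup\{f\})$. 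This reduces (iii) to (ii). For (ii)$\Rightarrow$(i), I verify that the independent sets defined above satisfy the axioms. Heredity follows from monotonicity. Exchange follows by induction on $|A \setminus B|$ using SEP, mimicking the proof for vector spaces, which is the routine part of Oxley's Theorem 1.4.4.

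Finally I check that the round trips are identities. Starting from flats, the fixed points of $\cl$ are exactly $\mathcal{F}$, because $\mathcal{F}$ is closed under intersections. The other two comparisons reduce to $\rk$ and $\cl$ determining each other. Since all of this is standard, I would present these verifications only briefly and refer to \cite{zbMATH05873618} for full details.
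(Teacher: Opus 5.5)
Your proposal is correct and follows the route the paper points to: you use the same four translations, you take the independence/closure equivalence from Oxley's Section 1.4 as the paper does, and you actually carry out the flats part that the paper leaves to Exercise 11 of that section. The one step you assert without argument is that every flat $G \supseteq F \cup \{e\}$ contains the cover $F_e$; this genuinely needs closure of flats under intersection (it fails for collections satisfying only the other two axioms), and it follows because $G \cap F_e$ is a flat with $F \subsetneq G \cap F_e \subseteq F_e$ (note $e \in G \cap F_e$ and $e \notin F$), hence equal to $F_e$.
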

While we do not want to present a proof of this standard fact, for which we refer the reader to Corollary 1.4.7 and Exercise 11 in Section 1.4 in \cite{zbMATH05873618}, we do want to indicate how to pass between these notions:
\vspace{4pt}
\begin{center}
\begin{tikzcd}
\{ \text{Matroids on } E \} \arrow[r, bend left, "\mathcal{I} \, \mapsto \, (A \,\mapsto \,\{ x \in E \, \mid \, \rk_M(A \cup \{ x \}) \, = \, \rk_M(A) \})"] & \{ \text{Closure operators on } E \text{ satisfying the SEP} \} \arrow[l, bend left, "\cl \, \mapsto \, \{ A \subset E \, \mid \, x \notin \cl(A \setminus \{ x \}) \text{ for all } x \in A \}"] \arrow[r, bend left, "\cl \, \mapsto \,\im(\cl)"] & \{ \text{Sets of flats on } E \} \arrow[l, bend left, "\mathcal{F} \, \mapsto \, (A \, \mapsto \, \bigcap_{A \subset F \in \mathcal{F}} F)"]
\end{tikzcd}
\end{center}
While this is relevant in matroid theory, it will actually not be of importance for this article to know how to pass between these equivalent characterizations of matroids. We only need to know that we can verify that two matroids agree by choosing one of the three notions, which gives us quite some flexibility.

\hfill

Let us finally present a few examples, some of which we will make use of later on.
\begin{ex}
$\phantom{1}$
\begin{enumerate}
\item[(i)] A vectorial matroid $M$ has a finite subset $E$ of a vector space $V$ as ground set and the independent sets are given by linearly independent sets within $E$. The closure operator maps a subset $A \subset E$ to the set of all vectors in $E$ lying in the linear span of $A$, and the flats of a vectorial matroid $M$ are those subsets $A \subset E$ with $A = \langle A \rangle \cap E$.
\item[(ii)] A graphic matroid $M$ has the edges $E$ of a finite graph $\Gamma = (V,E)$ as ground set and the independent sets are the forests, i.e., those collections of edges such that there is at most one path between any two of their vertices. The closure operator maps a subset $A \subset E$ to the set of all edges connecting vertices already connected by paths in $A$, and the flats of a graphic matroid are those subsets $A \subset E$ that contain all paths staying entirely within the connected component's set of vertices.
\item[(iii)] Given non-negative integers $r$ and $n$ with $r \leq n$, the uniform matroid $U_{r,n}$ on a finite set $E$ with $n$ elements has every subset $A \subset E$ of cardinality at most $r$ as an independent subset. The closure operator maps subsets of cardinality smaller than $r$ to themselves and subsets of cardinality at least $r$ to $E$, and the flats are the subsets of cardinality smaller than $r$ and $E$ itself. Note that it is customary to denote this matroid by $U_{r,n}$ independently of the ground set.
\item[(iv)] As a notable special case of the previous example we have the free matroid on $E$, which is the uniform matroid $U_{n,n}$. Here every subset is independent, the closure operator is just the identity map, and every subset is a flat.
\end{enumerate}
\end{ex}
Now we come to the notion of morphisms between matroids. While there are multiple meaningful choices, the most common one is the following.
\begin{de}
A map $\varphi \colon M \to M'$ between two matroids is strong if $\varphi^{-1}(F')$ is a flat of $M$ for every flat $F' \in \mathcal{F}(M')$.
\end{de}
\begin{ex}\label{ex strong maps}
In the setups from linear algebra and graph theory, strong maps arise as expected:
\begin{enumerate}
\item[(i)] If $M$ and $M'$ are two vectorial matroids with respect to subsets $E$ and $E'$ from two vector spaces $V$ and $V'$ over a field $F$, then any linear map $f \colon V \to V'$ satisfying $f(E) \subset E'$ yields a strong map $\varphi \colon M \to M'$ by restriction.
\item[(ii)] If $M$ and $M'$ are two graphic matroids associated with graphs $\Gamma$ and $\Gamma'$, then any graph morphism $\Gamma \to \Gamma'$ yields a strong map $M \to M'$.
\end{enumerate}
\end{ex}
Matroids together with strong maps form a category $\Mat$, as is not difficult to verify. Its properties are for instance studied in the article \cite{zbMATH06859742}. Let us note here that a bijective strong map $\varphi \colon M \to M'$ between matroids of the same rank is already an isomorphism, which follows from Corollary 7.3.4 together with Proposition 7.3.6 of \cite{zbMATH05873618}. In particular, a bijective strong self-map is an automorphism.

\hfill

There are various operations on matroids which are crucial for this article, some of which yield the decompositions of matroids which the $K$-theory studies. Let us introduce those:
\begin{de}
Let $M$ be a matroid with ground set $E$ and let $A$ be a subset of $E$.
\begin{enumerate}
\item[(i)] The restriction $\left.M\right|\!A$ of $M$ to $A$ is the matroid with ground set $A$ and
\begin{enumerate}
\item[-] independent sets $\mathcal{I}(\!\left.M\right|\!A) = \{ I \in \mathcal{I}(M) \mid I \subset A \}$
\item[-] closure operator $\cl_{\!\left.M\right|A}(S) = \cl_M(S) \cap A$
\item[-] flats $\mathcal{F}(\!\left.M\right|\!A) = \{ F \cap A \mid F \in \mathcal{F}(M) \}$
\end{enumerate}
\item[(ii)] The contraction $M / A$ of $M$ with respect to $A$ is the matroid with ground set $E \setminus A$ and
\begin{enumerate}
\item[-] independent sets $\mathcal{I}(M / A) = \{ I \subset E \setminus A \mid I \cup J \in \mathcal{I}(M) \}$ for a fixed basis $J$ of $\left.M\right|\!A$
\item[-] closure operator $\cl_{M / A}(S) = \cl_M(S \cup A) \setminus A$
\item[-] flats $\mathcal{F}(M / A) = \{ F \setminus A \mid F \in \mathcal{F}(M) \text{ with } A \subset F \}$
\end{enumerate}
\end{enumerate}
\end{de}
The restriction to the complement of $S$ is also called the deletion of $S$ and denoted by $M \setminus S$.

These notions have some compatibility, as the following simple lemma shows, which can be found as Proposition 3.1.25 (iii) in \cite{zbMATH05873618}.
\begin{lem}\label{contraction and restrictrion commute}
Let $M$ be a matroid and let $A$ and $B$ be subsets of $E(M)$ with $A \subset B$. Then we have $\left.(M/A)\right|\!(B \setminus A) = (\!\left.M\right|\!B)/A$.
\end{lem}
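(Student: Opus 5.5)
Both matroids have ground set $B \setminus A$, so by Proposition \ref{matroid characterizations} it suffices to compare one of the three equivalent structures. I would use closure operators, since the formulas for restriction and contraction are the most explicit there; flats would work just as well.

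Let $S \subset B \setminus A$. On the left, first contract by $A$ and then restrict to $B\setminus A$:
\begin{equation*}
\cl_{(M/A)|(B\setminus A)}(S) = \cl_{M/A}(S) \cap (B\setminus A) = \bigl(\cl_M(S\cup A)\setminus A\bigr)\cap (B\setminus A).
\end{equation*}
On the right, first restrict to $B$ and then contract $A \subset B$:
\begin{equation*}
\cl_{(M|B)/A}(S) = \cl_{M|B}(S\cup A)\setminus A = \bigl(\cl_M(S\cup A)\cap B\bigr)\setminus A.
\end{equation*}
Here I use that $S \cup A \subset B$, so the restriction formula $\cl_{M|B}(T)=\cl_M(T)\cap B$ applies to $T = S\cup A$.

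Both expressions equal $\cl_M(S\cup A)\cap (B\setminus A)$, by elementary set algebra and $A \subset B$. Hence the two closure operators on $B\setminus A$ coincide, and so do the matroids.

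The only point needing care is that the contraction formula for the closure operator is applied to the correct matroid. On the right it is applied to $M|B$ with $A \subset E(M|B) = B$, which holds by assumption. I expect no real obstacle. As a cross-check, the flats description gives the same result: the flats on both sides are the sets $(F\cap B)\setminus A$ for flats $F$ of $M$ containing $A$.
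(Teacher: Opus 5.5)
Your proof is correct: both closures reduce to $\cl_M(S\cup A)\cap(B\setminus A)$, and your cross-check with flats is also right.

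The paper makes the same kind of definitional comparison on the common ground set $B\setminus A$, but with independent sets rather than closure operators. It fixes a basis $I$ of $M|A$ and notes that $I$ is also a basis of $(M|B)|A$. For $J\subset B\setminus A$, independence of $J\cup I$ in $M$ is then equivalent to independence in $M|B$, because $J\cup I\subset B$.

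Your closure-operator version needs no choice of basis, so it does not rely on the contraction being independent of that choice, and it reduces to elementary set algebra. The paper's version works directly with the independence axioms it takes as primary. Both are complete and equally short.
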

\begin{proof}
First note that both of these matroids have ground set $B \setminus A$. Now choose a basis $I$ of $\left.M\right|\!A$. This is then also a basis of $\left.\left.(M\right|\!B)\right|\!A$. Now we have by definition that a subset $J \subset B \setminus A$ is independent in $\left.(M/A)\right|\!(B \setminus A)$ if and only if $J \cup I$ is independent in $M$, which is equivalent to $J \cup I$ being independent in $\left.M\right|\!B$ as $J \cup I \subset B$. Moreover, this last condition is equivalent to $J$ being independent in $(\!\left.M\right|\!B)/A$, which finishes the proof.
\end{proof}
The category of matroids admits finite coproducts, defined as follows.
\begin{de}
The direct sum $M \oplus M'$ of two matroids $M$ and $M'$ is the matroid with ground set $E(M) \copro \, E(M')$ and
\begin{enumerate}
\item[-] independent sets $\mathcal{I}(M \oplus M') = \{ I \copro I' \mid I \in \mathcal{I}(M) \text{ and } I' \in \mathcal{I}(M') \}$
\item[-] closure operator $\cl_{M \oplus M'}(A) = \cl_M(A \cap E(M)) \copro \, \cl_{M'}(A \cap E(M'))$
\item[-] flats $\mathcal{F}(M \oplus M') = \{ F \copro F' \mid F \in \mathcal{F}(M) \text{ and } F' \in \mathcal{F}(M') \}$
\end{enumerate}
\end{de}
A central operation for us will be the free coextension of a matroid by a single element.
\begin{de}
Let $M$ be a matroid with ground set $E$ and let $t \notin E$. The free coextension $M \times t$ of $M$ by $t$ is the matroid with ground set $E \cup \{ t \}$ and
\begin{enumerate}
\item[-] independent sets $\mathcal{I}(M \times t) = \{ I \cup J \mid I \in \mathcal{I}(M) \text{ and } J \subset E \cup \{ t \} \text{  with } \lvert J \rvert \leq 1 \}$
\item[-] closure operator $\cl_{M \times t}$ mapping a subset $A \subset E \cup \{ t \}$ to $A$ if $A$ is contained in $E$ and is independent, and mapping it to $\cl_M(A \setminus \{ t \}) \cup \{ t \}$ otherwise
\item[-] flats $\mathcal{F}(M \times t) = \mathcal{I}(M) \cup \{ F \cup \{ t\} \mid F \in \mathcal{F}(M) \}$
\end{enumerate}
\end{de}
Based on the above description of the independent sets we immediately obtain $\rk(M \times t) = \rk(M) + 1$. Furthermore, the above definitions are made to satisfy the following.
\begin{lem}\label{basic contr comp}
Let $M$ be a matroid with ground set $E$ and let $t \notin E$. Then we have $(M \times t) / \{ t \} = M$ and $\left.(M \times  t )\right|\! \{ t \}  = U_{1,1}$. If $M$ is non-free, we furthermore have $(M \times t) / E = U_{0,1}$ and if $M$ is free, then $(M \times t) / E = U_{1,1}$.
\end{lem}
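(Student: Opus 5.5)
Each of the four claims is an equality of matroids on a common ground set, so we check that the independent sets agree, working directly from the description of $\mathcal{I}(M \times t)$ as sets $I \cup J$ with $I \in \mathcal{I}(M)$ and $\lvert J \rvert \leq 1$. Note that such a set can be rewritten as an independent set of $M$ plus at most one element of $E \cup \{t\}$. We fix once and for all the basis $\{t\}$ of $\left.(M \times t)\right|\!\{t\}$; it is independent because we may take $I = \emptyset$ and $J = \{t\}$.

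For $\left.(M\times t)\right|\!\{t\} = U_{1,1}$, the subsets of $\{t\}$ are $\emptyset$ and $\{t\}$, and both are independent, so the restriction is free of rank one. For $(M \times t)/\{t\}$, a subset $S \subset E$ is independent exactly when $S \cup \{t\} \in \mathcal{I}(M\times t)$. If $S \in \mathcal{I}(M)$, take $I = S$ and $J = \{t\}$. Conversely, suppose $S \cup \{t\} = I \cup J$ with $I \subset E$ independent in $M$ and $\lvert J \rvert \leq 1$. Since $t \notin I$, we must have $J = \{t\}$, and hence $S = I \in \mathcal{I}(M)$. So $(M \times t)/\{t\} = M$.

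For the contraction by $E$, the ground set is $\{t\}$, and we choose a basis $B$ of $\left.(M\times t)\right|\!E$. A subset $S \subset E$ lies in $\mathcal{I}(M\times t)$ iff $S$ is $M$-independent or $S = I \cup \{e\}$ with $I \in \mathcal{I}(M)$ and $e \in E$.
\begin{itemize}
\item If $M$ is non-free, pick a basis $I_0$ of $M$ with $I_0 \neq E$ and some $e \in E \setminus I_0$. Then $I_0 \cup \{e\}$ is independent in $M \times t$ and has size $\rk(M) + 1 = \rk(M \times t)$, so it is a basis of $M \times t$ contained in $E$. Taking $B = I_0 \cup \{e\}$, the set $B \cup \{t\}$ has size exceeding $\rk(M\times t)$ and so is dependent. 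Thus $\{t\}$ is dependent in the contraction, which is therefore $U_{0,1}$.
\item If $M$ is free, then $\left.(M\times t)\right|\!E$ has every subset of $E$ independent, so $B = E$. Now $E \cup \{t\} = E \cup J$ with $J = \{t\}$ is independent, so $\{t\}$ is independent in the contraction, which is therefore $U_{1,1}$.
\end{itemize}

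The main point to watch is the independence of the contraction from the choice of basis, which the paper's definition presupposes. With that granted, the only substantive step is the non-free case, where the rank count shows that $E$ already has full rank in $M \times t$.
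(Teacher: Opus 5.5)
Your proof is correct, but it takes a different route from the paper's. You work with independent sets throughout, whereas the paper handles the two contraction claims with closure operators. It treats only the restriction claim via independence of $\{t\}$, exactly as you do.

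For $(M\times t)/\{t\}$ the paper computes $\cl_{(M\times t)/\{t\}}(A) = \cl_{M\times t}(A\cup\{t\})\setminus\{t\} = \cl_M(A)$ in one line. For the contraction by $E$ it only evaluates $\cl_{(M\times t)/E}(\emptyset) = \cl_{M\times t}(E)\setminus E$. By the two clauses of the closure formula for $M\times t$, this is $\{t\}$ when $E$ is dependent in $M$ and $\emptyset$ when $E$ is independent.

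\textbf{What the paper's route buys.} The closure formula for contractions is choice-free, so the paper never has to pick a basis of the contracted set. Your route instead fixes a basis of $\left.(M\times t)\right|\!E$. In the non-free case it then needs the rank count $\rk(M\times t)=\rk(M)+1$ to see that $I_0\cup\{e\}$ is such a basis and that adjoining $t$ makes it dependent.

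\textbf{What your route buys.} It uses only the primary, independent-set description of $M\times t$. So it does not depend on the closure-operator description stated in the definition, whose agreement with the independent sets the paper never checks. It also makes visible where the free/non-free dichotomy comes from: whether $E$ already attains the full rank $\rk(M)+1$ of $M\times t$.
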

\begin{proof}
We verify the first claim in terms of the closure operator. Here we have
$$\cl_{(M \times t) / \{ t \}}(A) = \cl_{M \times t}(A \cup \{ t \}) \setminus \{ t\} = (\cl_M(A) \cup \{ t \}) \setminus \{ t \} = \cl_M(A)$$
for all subsets $A \subset E$, so that $(M \times t) / \{ t \} = M$.

For the second claim, note that since $\emptyset \in \mathcal{I}(M)$, we have $\{ t \} \in \mathcal{I}(M \times t)$ and thus $\{ t \} \in \mathcal{I}(\!\left.(M \times  t )\right|\! \{ t \})$. In particular, $\left.(M \times  t)\right|\! \{ t \} = U_{1,1}$ is free on $\{ t \}$.

Now suppose that $M$ is not free, so that $E$ is a dependent set of $M$. Then
$$\cl_{(M \times t) / E}(\emptyset) = \cl_{M \times t}(E) \setminus E = (\cl_M(E) \cup \{ t \}) \setminus E = (E \cup \{ t \}) \setminus E = \{  t \}$$
and we obtain $(M \times t) / E = U_{0,1}$. If $M$ is free instead, then $E$ is an independent set of $M$ and we get
$$\cl_{(M \times t) / E}(\emptyset) = \cl_{M \times t}(E) \setminus E = E \setminus E = \emptyset,$$
so that $(M \times t) / E = U_{1,1}$. This finishes the proof.
\end{proof}
Moreover, automorphisms of matroids can be extended to free coextensions, which will become very useful in our $K$-theory computations later on. This statement follows from Proposition 7.2.6 and Proposition 7.3.9 of \cite{zbMATH05873618}, but their proofs use notions we have not introduced here. Therefore, we will give a direct proof for the convenience of the reader.
\begin{lem}\label{auto ext}
Let $M$ be a matroid and let $M \times t$ be the free coextension of $M$ by $t$. If $\alpha \colon M \to M$ is an automorphism, then so is its extension $\beta \colon M \times t \to M \times t$ defined by mapping $t$ to $t$.
\end{lem}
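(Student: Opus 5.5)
The plan is to check the claim through the flats of $M \times t$, using the description $\mathcal{F}(M \times t) = \mathcal{I}(M) \cup \{ F \cup \{ t \} \mid F \in \mathcal{F}(M) \}$, together with the remark after the definition of strong maps: a bijective strong self-map is already an automorphism. The extension $\beta$ is clearly a bijection of $E \cup \{ t \}$, with $\beta|_E = \alpha$ and $\beta(t) = t$. So it suffices to show that $\beta$ is strong, i.e.\ that $\beta^{-1}(G)$ is a flat of $M \times t$ for every flat $G$ of $M \times t$.

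There are two kinds of flats to treat. First let $G = I$ with $I \in \mathcal{I}(M)$. Then $t \notin G$, so $\beta^{-1}(I) = \alpha^{-1}(I)$, and I need this to be independent in $M$. For this I will use that $\alpha$ is an automorphism, so $\alpha^{-1}$ is also a strong map. I would then show that automorphisms preserve independence via closures. For a bijective strong map $\varphi$ we have $\varphi(\cl_M(A)) \subset \cl_M(\varphi(A))$, since $\varphi^{-1}(\cl_M(\varphi(A)))$ is a flat containing $A$. Applying this to both $\alpha$ and $\alpha^{-1}$ gives $\alpha(\cl_M(A)) = \cl_M(\alpha(A))$. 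Then independence in the form ``$x \notin \cl_M(A \setminus \{ x \})$ for all $x \in A$'' (the passage between the characterizations in Proposition \ref{matroid characterizations}) is preserved by $\alpha^{-1}$. Hence $\alpha^{-1}(I) \in \mathcal{I}(M) \subset \mathcal{F}(M \times t)$.

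Second let $G = F \cup \{ t \}$ with $F \in \mathcal{F}(M)$. Then $\beta^{-1}(G) = \alpha^{-1}(F) \cup \{ t \}$, and $\alpha^{-1}(F)$ is a flat of $M$ because $\alpha$ is strong. So $\beta^{-1}(G)$ again has the required form. This shows $\beta$ is a bijective strong self-map of $M \times t$, hence an automorphism. Alternatively, one can check directly that $\beta^{-1}$ is strong by the same argument with $\alpha^{-1}$ in place of $\alpha$; this avoids the cited rank criterion.

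The only mildly delicate step is the first case, namely that automorphisms in the flat-preimage sense preserve independent sets. It is handled by the closure identity $\alpha(\cl_M(A)) = \cl_M(\alpha(A))$ described above.
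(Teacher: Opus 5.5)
Your proof is correct and follows the paper's argument exactly. Like the paper, you reduce to showing that $\beta$ is strong (using that a bijective strong self-map is an automorphism) and then check the two kinds of flats $I \in \mathcal{I}(M)$ and $F \cup \{t\}$ with $F \in \mathcal{F}(M)$. The one addition is that you justify, via the closure identity $\alpha(\cl_M(A)) = \cl_M(\alpha(A))$, why $\alpha^{-1}(I)$ is independent, a step the paper simply asserts.
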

\begin{proof}
Since the map $\beta$ is clearly bijective, it suffices to verify that it is strong, as we observed just below Example \ref{ex strong maps}. By definition, a flat of $M \times t$ is either an independent set $I \in \mathcal{I}(M)$ or $F \cup \{ t \}$ for some flat $F \in \mathcal{F}(M)$. In the first case the preimage $\beta^{-1}(I) = \alpha^{-1}(I)$ is independent and hence a flat of $M \times t$, and in the second case $\beta^{-1}(F \cup \{ t \}) = \alpha^{-1}(F) \cup \{ t \}$ is also a flat of $M \times t$. So $\beta$ is strong and thus an automorphism.
\end{proof}
Let us note that the above statement is false for endomorphisms, as can be easily seen from constant maps between uniform matroids.

\hfill

To define pointed matroids, we call an element $e \in E(M)$ a loop if it is not contained in any independent set of the matroid $M$. Equivalently, $e$ is a loop if it is contained in the unique minimal flat of $M$. As such, loops play the role of the zero vector in vector spaces.
\begin{de}
A pointed matroid is a pair $(M,*_M)$ where $M$ is a matroid and $*_M \in E(M)$ is a loop of $M$.
\end{de}
As is done in topology, we usually omit the base point from the notation. However, we will need to distinguish the ground set $E(M)$ and its reduced variant $\widetilde{E}(M) = E(M) \setminus \{ *_M \}$.
\begin{ex}
In the pointed setting, we add a distinguished loop to uniform matroids. In particular, $\lvert \widetilde{E}(U_{r,n}) \rvert = n$ and $\lvert E(U_{r,n}) \rvert = n + 1$. This makes $U_{0,0}$ the zero matroid, and free pointed matroids are those matroids that are free after removing the base point; see also Section 4.2 of \cite{zbMATH06859742}.
\end{ex}

\begin{de}
A map $\varphi \colon M \to M'$ between two pointed matroids is strong if $\varphi(*_M) = *_{M'}$ and $\varphi^{-1}(F')$ is a flat of $M$ for every flat $F' \in \mathcal{F}(M')$.
\end{de}
This gives rise to a category $\Mat_*$, the category of pointed matroids, which is studied in more detail in \cite{zbMATH06859742}. Here it is also verified that this category admits a forgetful functor $\Mat_* \to \FinSet_*$ and has finite coproducts called direct sums, which we recall as part of the following list, which mostly stems from Remark 2.9 of \cite{zbMATH07242437}:
\begin{rem}
For a pointed matroid $M$ and a subset $A \subset \widetilde{E}(M)$ we adopt the following conventions regarding the various operations that we have introduced in the unpointed setting:
\begin{enumerate}
\item[-] The restriction $\left.M\right|\!A$ is the unpointed restriction $\!\left.M\right|\! (A \cup \{ *_M \})$ including the base point. In particular, the restriction of an automorphism acts (trivially) on the base point.
\item[-] The contraction $M/A$ is the usual unpointed contraction $M/A$. The base point is not allowed to be contracted.
\item[-] The direct sum $M \oplus M'$ of $M$ with another pointed matroid $M'$ is the usual unpointed direct sum $M \oplus M'$ with the two base points identified. In other words, the ground set is $\widetilde{E}(M) \copro \, \widetilde{E}(M') \copro \, \{ * \}$ with base point $*$, and the independent sets and flats are disjoint unions of independent sets and flats of the two matroids, with the base points identified for the flats.
\item[-] The free coextension $M \times t$ of $M$ by an element $t$ is the usual unpointed free coextension with the base point added as a loop $((M \setminus \{ *_M \}) \times t) \copro \, \{ *_M \}$.  Here the deletion is in the unpointed setting.
\end{enumerate}
\end{rem}
With those conventions in place, Lemma \ref{basic contr comp} and Lemma \ref{auto ext} also hold in the pointed setting with $E$ replaced by $\widetilde{E}$, as can be quickly verified. Therefore, we will also refer to those statements when we use them in the pointed setting.
\begin{ex}
Let us conclude this section with two central examples of pointed strong maps, also found as Example 2.11.3 of \cite{zbMATH07242437}:
\begin{enumerate}
\item[(i)] Inclusion maps of the form $\left.M\right|\!A \to M$ are by definition pointed strong maps.
\item[(ii)] Quotient maps of the form $M \to M/A$ are by definition pointed strong maps. Here all the elements contained in $A$ are mapped to the base point.
\end{enumerate}
\end{ex}
These types of maps rise to the exact sequences encoded in the categorical structure of the next section.

\section{Proto-exact Categories and their K-theory}
Following Dyckerhoff and Kapranov \cite{zbMATH07103772}, we give a brief recollection of proto-exact categories, which are not necessarily additive variants of exact categories, and of the $S_{\bullet}$-construction of K-theory for such categories. This yields the algebraic K-theory space $K(\Mat_*)$ of pointed matroids.

\begin{de}
A proto-exact category is a category $\mathcal{C}$ together with two collections of morphisms $\mathcal{M}$ and $\mathcal{E}$, called admissible monomorphisms and admissible epimorphisms, respectively, subject to:
\begin{enumerate}
\item[(i)] The category $\mathcal{C}$ has a zero object $0$, and any morphism of the form $0 \to X$ lies in $\mathcal{M}$ and any morphism of the form $X \to 0$ lies in $\mathcal{E}$.
\item[(ii)] The collections $\mathcal{M}$ and $\mathcal{E}$ contain all isomorphisms and are closed under composition.
\item[(iii)] A commutative square 
\begin{center}
\begin{tikzcd}
X \arrow[r] \arrow[d] & Y \arrow[d] \\
Z \arrow[r] & W
\end{tikzcd}
\end{center}
with horizontal morphisms from $\mathcal{M}$ and vertical morphisms from $\mathcal{E}$ is cartesian if and only if it is cocartesian.
\item[(iv)] Any diagram in $\mathcal{C}$ of the form 
\begin{center}
\begin{tikzcd}
X \arrow[r] \arrow[d] & Y \\
Z & 
\end{tikzcd}
\hspace{5pt}or\hspace{5pt}
\begin{tikzcd}
 & Y \arrow[d] \\
Z \arrow[r] & W
\end{tikzcd}
\end{center}
with horizontal morphisms from $\mathcal{M}$ and vertical morphisms from $\mathcal{E}$ can be completed to a bicartesian square as in the previous point.
\end{enumerate}
\end{de}
Admissible monomorphisms are typically denoted by $\hookrightarrow$ and admissible epimorphisms by $\twoheadrightarrow$, despite the fact that those symbols could also stand for monomorphisms and epimorphisms in the category $\mathcal{C}$. A bicartesian square of the form
\begin{center}
\begin{tikzcd}
X \arrow[r, hookrightarrow] \arrow[d, twoheadrightarrow] & Y \arrow[d, twoheadrightarrow] \\
0 \arrow[r, hookrightarrow] & W
\end{tikzcd}
\end{center}
is usually denoted by $X \hookrightarrow Y \twoheadrightarrow W$ and is called an admissible exact sequence of $\mathcal{C}$. 

\hfill

A natural question is, of course, how exactly proto-exact categories relate to their classical counterpart. 
\begin{rem}\label{exact vs proto}
For a proto-exact category to be exact, we would need to require $\mathcal{C}$ to be additive instead of just being pointed, $\mathcal{M}$ to be closed under all pushouts and not only under pushouts along $\mathcal{E}$, and $\mathcal{E}$ to be closed under all pullbacks and not only under pullbacks along $\mathcal{M}$. For more details on this comparison, we refer the reader to Section 1 of \cite{arXiv:2503.05940}.
\end{rem}
\begin{ex}
There are two central examples of interest to us here:
\begin{enumerate}
\item[(i)] The category $\FinSet_*$ of pointed finite sets is a proto-exact category. Here the admissible monomorphisms are the injections and the admissible epimorphisms are those surjections whose fibers over non-base points are singletons; see Example 2.4.4 (a) of \cite{zbMATH07103772}. 
\item[(ii)] The category $\Mat_*$ of pointed matroids is a proto-exact category, as shown in Section 5 of \cite{zbMATH07242437}. Here $\mathcal{M}$ consists of inclusions of the form $\left.M\right|\!A \hookrightarrow M$ up to isomorphism and $\mathcal{E}$ consists of quotient maps $M \twoheadrightarrow M/A$ up to isomorphism, where in both cases $A \subset \widetilde{E}(M)$.
\end{enumerate}
\end{ex}
As explained in Remark \ref{exact vs proto}, exact categories are examples of proto-exact with the same exact structure. For examples from quadratic form theory or Arakelov geometry, we refer the reader to Section 2.4 of \cite{zbMATH07103772}, and for examples related to $\mathbb{F}_1$-geometry, we refer the reader to \cite{zbMATH07525043}.

\hfill

Let us now turn our focus towards K-theory. Originally, Waldhausen's $S_{\bullet}$-construction applies to categories with weak equivalences and cofibrations, nowadays usually referred to as Waldhausen categories \cite{zbMATH03927168}. As explained in Section 2.4 of \cite{zbMATH07103772}, it is also available for proto-exact categories. Let us recall the construction.

\hfill

For a non-negative integer $n$, we consider the functor category $\mathcal{T}_n = \Fun([1],[n])$ which corresponds to the poset of ordered pairs $0 \leq i \leq j \leq n$ with the order $(i,j) \leq (i',j')$ if $i \leq i'$ and $j \leq j'$. Then defining a functor $F \colon \mathcal{T}_n \to \mathcal{C}$ amounts to a choice of a commutative diagram in $\mathcal{C}$ of the form
\begin{center}
\begin{tikzcd}
F(0,0) \arrow[r] & F(0,1) \arrow[r] \arrow[d] & \dotsc \arrow[r] & F(0,n-1) \arrow[r] \arrow[d] & F(0,n) \arrow[d] \\
& F(1,1) \arrow[r] & \dotsc \arrow[r] & F(1,n-1) \arrow[r] \arrow[d] & F(1,n) \arrow[d] \\
& & \ddotsc & \vdotsc \arrow[d] & \vdotsc \arrow[d] \\
& & & F(n-1,n-1) \arrow[r] & F(n-1,n) \arrow[d] \\
& & & & F(n,n) \\
\end{tikzcd}
\end{center}
If now $\mathcal{C}$ is a proto-exact category, we let $S_n(\mathcal{C})$ denote the maximal subgroupoid of the full subcategory of those functors $F \in \Fun(\mathcal{T}_n, \mathcal{C})$ satisfying
\begin{enumerate}
\item[(i)] $F(i,i) = 0$ for all $0 \leq i \leq n$;
\item[(ii)] The horizontal arrows are admissible monomorphisms and the vertical arrows are admissible epimorphisms;
\item[(iii)] All squares are bicartesian.
\end{enumerate}
This construction comes with face and degeneracy maps by omitting the $k$-th row and $k$-th column or inserting an additional row and column at the $k$-th position via identity maps. It is then not difficult to verify that one obtains a simplicial groupoid $S_{\bullet}(\mathcal{C})$.

\hfill

Dyckerhoff and Kapranov provide another description of the groupoids $S_n(\mathcal{C})$, which we make use of. For this, we let $\mathcal{M}_n$ denote the groupoid of chains of $n - 1$ composable admissible monomorphisms 
\begin{center}
\begin{tikzcd}
X_1 \arrow[r, hookrightarrow] & \dotsc \arrow[r, hookrightarrow] & X_n 
\end{tikzcd}
\end{center}
and their isomorphisms. We refer to this as the groupoid of flags of length $n - 1$. Note that this makes $\mathcal{M}_0$ the terminal groupoid.
\begin{lem}\label{Flag groupoid description}
Let $\mathcal{C}$ be a proto-exact category. For all positive integers $n$, the functor $\mu_n \colon S_n(\mathcal{C}) \to \mathcal{M}_n$ mapping a diagram $F$ to the subdiagram
\begin{center}
\begin{tikzcd}
F(0,1) \arrow[r, hookrightarrow] & \dotsc \arrow[r, hookrightarrow] & F(0,n) 
\end{tikzcd}
\end{center}
is an equivalence of groupoids. Under these equivalences, the face maps $d_i \colon S_n(\mathcal{C}) \to S_{n-1}(\mathcal{C})$ are identified with
\begin{enumerate}
\item[(i)] the map that omits the $i$-th entry for all $1 \leq i \leq n$;
\item[(ii)] the map that sends a flag
\begin{center}
\begin{tikzcd}
X_1 \arrow[r, hookrightarrow] & \dotsc \arrow[r, hookrightarrow] & X_n 
\end{tikzcd}
\end{center}
of length $n-1$ to the flag of quotients
\begin{center}
\begin{tikzcd}
X_2/X_1 \arrow[r, hookrightarrow] & \dotsc \arrow[r, hookrightarrow] & X_n/X_1 
\end{tikzcd}
\end{center}
for $i = 0$.
\end{enumerate}
\end{lem}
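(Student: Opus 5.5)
To show $\mu_n$ is an equivalence, I will show it is essentially surjective and fully faithful, and I will build an explicit quasi-inverse from the axioms of a proto-exact category. The idea is that every entry $F(i,j)$ of a diagram in $S_n(\mathcal{C})$ is determined, up to unique isomorphism, by the top row via bicartesian squares. Concretely, I will argue by induction on the rows. Row $0$ is given by $F(0,j)$ for $1 \le j \le n$, together with $F(0,0)=0$.

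Suppose row $i-1$ is known. Consider the square with corners $F(i-1,i)$, $F(i-1,j)$, $F(i,i)=0$ and $F(i,j)$. Its top edge is an admissible monomorphism (a composite of horizontal arrows) and its left edge $F(i-1,i)\twoheadrightarrow 0$ is an admissible epimorphism. Axiom (iv) lets us complete this span to a bicartesian square, so $F(i,j)$ is the cokernel $F(i-1,j)/F(i-1,i)$. The pasting lemma for cocartesian squares then shows this agrees with the composite square built from the smaller squares. It also shows the required horizontal maps $F(i,j)\to F(i,j+1)$ exist as admissible monomorphisms, since the squares obtained from axiom (iv) have admissible edges on all sides.

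\textbf{Essential surjectivity.} Given a flag $X_1\hookrightarrow\dots\hookrightarrow X_n$, I define $F(0,j)=X_j$ and $F(i,j)=X_j/X_i$, each as the pushout of $0\leftarrow X_i\hookrightarrow X_j$ chosen via axiom (iv). The maps between these entries are induced by the universal property of pushouts. I then check that every small square is bicartesian. It is cocartesian because, by pasting, $(X_j/X_i)\sqcup_{X_{j}/X_i}\dots$ reduces to the statement that a rectangle and its left part are cocartesian. Axiom (iii) then upgrades cocartesian to bicartesian.

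\textbf{Full faithfulness.} A morphism in $S_n(\mathcal{C})$ is a natural isomorphism of diagrams. Given an isomorphism of top rows, it extends uniquely to the whole diagram, because each $F(i,j)$ is a pushout and so the map on it is forced by the universal property. This gives a bijection on Hom-sets.

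\textbf{Identification of face maps.} For $1\le i\le n$, the face $d_i$ deletes row $i$ and column $i$, so the top row loses exactly its $i$-th entry. For $d_0$, deleting row $0$ and column $0$ makes row $1$ the new top row. Its entries are $F(1,j)=X_j/X_1$ for $j\ge 2$, and these are precisely the quotient flag.

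\textbf{Main obstacle.} The step I expect to be hardest is checking that all small squares built in the essential surjectivity step are bicartesian and that the induced horizontal and vertical maps are admissible. I will handle this with the pasting lemma for pushouts along admissible epimorphisms, together with the fact that the completions given by axiom (iv) are squares of the admissible type. That ensures the rectangles produced are exactly the squares to which axiom (iii) applies.
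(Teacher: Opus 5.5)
Your identification of the face maps matches the paper's argument: $d_i$ for $i \geq 1$ is immediate, and for $d_0$ you paste the small squares between rows $0$ and $1$ horizontally to see that $F(1,j)$ is a pushout of $0 \leftarrow F(0,1) \hookrightarrow F(0,j)$.

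The difference lies in the equivalence $\mu_n$ itself. The paper simply cites Lemma 2.4.9(i) of Dyckerhoff--Kapranov. You instead rebuild the whole staircase from its top row by chosen pushouts and check essential surjectivity and full faithfulness by hand. Your route is self-contained and shows exactly where axioms (iii) and (iv) enter; the citation buys brevity.

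Your outline is sound, but the step you call the main obstacle needs a definite order, and your written cocartesian check is garbled.

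First, fix $i \leq k \leq l$ and consider the rectangle $X_i \hookrightarrow X_k \hookrightarrow X_l$ over $0 \to X_k/X_i \to X_l/X_i$. Both the left square and the outer rectangle are the chosen pushouts, so the pasting lemma makes the right square cocartesian. Comparing it with the completion of $X_k/X_i \twoheadleftarrow X_k \hookrightarrow X_l$ given by (iv) shows that the induced map $X_k/X_i \to X_l/X_i$ lies in $\mathcal{M}$, since $\mathcal{M}$ contains isomorphisms and is closed under composition.

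Only then can you treat the vertical maps, which your sketch never addresses explicitly. Complete $0 \twoheadleftarrow X_{i+1}/X_i \hookrightarrow X_j/X_i$ via (iv). Vertical pasting with the square just obtained (with $k=i+1$, $l=j$) identifies the result with $X_j/X_{i+1}$, so $X_j/X_i \to X_j/X_{i+1}$ lies in $\mathcal{E}$.

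Cocartesianness of each small square then follows by pasting, and (iii) upgrades it to bicartesian. For full faithfulness, you should also note that the maps induced on pushouts are automatically natural with respect to every arrow of the staircase. This again follows from the uniqueness part of the universal property.
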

\begin{proof}
The equivalence has been established in Lemma 2.4.9 (i) of \cite{zbMATH07103772}. For $1 \leq i \leq n$, the description of the face maps is clear from the definition of $\mu_n$. Thus it only remains to understand the case of $i = 0$. Applying $d_0$ to a diagram $F \in S_n(\mathcal{C})$ leaves
\begin{center}
\begin{tikzcd}
F(1,2) \arrow[r, hookrightarrow] & \dotsc \arrow[r, hookrightarrow] & F(1,n) 
\end{tikzcd}
\end{center}
as the new first row. Now we want to identify this row in terms of the old first row. Since all the squares in $F$ are bicartesian, we have pushout squares of the form
\begin{center}
\begin{tikzcd}
F(0,1) \arrow[r] \arrow[d] & F(0,j) \arrow[d] \\
0 \arrow[r] & F(1,j)
\end{tikzcd}
\end{center}
for all $2 \leq j \leq n$ by composing the squares horizontally. In particular, we obtain isomorphisms $F(1,j) \cong F(0,j)/F(0,1)$, yielding the claimed description of the face maps also for $i = 0$.
\end{proof}
In the setting of Waldhausen categories, this lemma is also known, as the reader might expect. It can be found as Definition IV.8.3.1 in Weibel's $K$-book \cite{zbMATH06123945} together with the paragraph immediately following it. As it is quite crucial for Section $4$, we still wanted to include the short argument dewcribing the face maps.

\hfill

In the case of matroids, we will often abbreviate a flag
\begin{center}
\begin{tikzcd}
\left.M\right|\!A_1 \arrow[r, hookrightarrow] & \dotsc \arrow[r, hookrightarrow] & \left.M\right|\!A_{n-1}  \arrow[r, hookrightarrow] & M
\end{tikzcd}
\end{center}
as $A_1 \subset \dotsc \subset A_{n - 1} \subset M$ or $A_1 \subset \dotsc \subset A_{n - 1} \subset \widetilde{E}(M)$ and denote its isomorphism class by $[M;A_1,\dotsc,A_{n-1}]$. Moreover, we call a flag proper if $A_1$ is non-empty and all other inclusions are proper inclusions. 

\hfill

Now that we have this model for $S_{\bullet}(\mathcal{C})$, let us define the K-theory space. For this, we consider the simplicial space $BS_{\bullet}(\mathcal{C})$ obtained by applying the classifying space functor.
\begin{de}
The (delooped) algebraic K-theory space $K(\mathcal{C})$ of a proto-exact category $\mathcal{C}$ is the geometric realization of the simplicial space $BS_{\bullet}(\mathcal{C})$, and its shifted homotopy groups $\pi_{n + 1}(K(\mathcal{C}))$ are the algebraic K-theory groups $K_n(\mathcal{C})$.
\end{de}
Technically, this should of course not be called the algebraic K-theory space, as this is a delooping of it, but for the sake of simplicity, we will still refer to it as such. As for other types of categories, this is not only a space, but actually carries additional structure.
\begin{lem}\label{Hspace}
The space $K(\Mat_*)$ is the infinite loop space of a spectrum $\Sigma\!\operatorname{K}(\Mat_*)$. In particular, it becomes a connected $H$-space on the level of the homotopy category.
\end{lem}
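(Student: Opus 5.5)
The plan is to reduce the statement to the general machinery of Waldhausen's $S_\bullet$-construction for proto-exact categories, using the symmetric monoidal structure on $\Mat_*$ given by the direct sum. First I would check that the direct sum $M \oplus M'$ is the coproduct in $\Mat_*$ and is compatible with the proto-exact structure. Concretely, the direct sum of two admissible monomorphisms $\left.M\right|\!A \hookrightarrow M$ and $\left.M'\right|\!A' \hookrightarrow M'$ is again of the form $\left.(M\oplus M')\right|\!(A \copro A') \hookrightarrow M \oplus M'$. Likewise, $(M \oplus M')/(A \copro A') = M/A \oplus M'/A'$. Both identities are immediate from the descriptions of flats of direct sums and of restrictions and contractions given above. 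Hence $\oplus$ preserves $\mathcal{M}$, $\mathcal{E}$ and bicartesian squares, since those in $\Mat_*$ are detected by exact sequences. It therefore induces a levelwise functor $S_n(\Mat_*) \times S_n(\Mat_*) \to S_n(\Mat_*)$, which is compatible with faces and degeneracies. The result is a simplicial symmetric monoidal groupoid, with unit the zero matroid $U_{0,0}$.

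Next I would invoke the standard fact, as in Section 2.4 of \cite{zbMATH07103772} and in Waldhausen's original work \cite{zbMATH03927168}, that the $S_\bullet$-construction can be iterated. This produces a multisimplicial object $S^{(k)}_{\bullet,\dots,\bullet}(\mathcal{C})$ for each $k$. Here one uses that the groupoid of $S_n$-diagrams in a proto-exact category is itself proto-exact; this is stated in Dyckerhoff--Kapranov. Equivalently, one can apply Segal's machine to the symmetric monoidal structure levelwise.

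The realizations $|BS^{(k)}(\Mat_*)|$ then form a spectrum. Its structure maps $|BS^{(k)}| \to \Omega |BS^{(k+1)}|$ are equivalences for $k \geq 1$ by Waldhausen's additivity theorem, whose proof carries over verbatim to the proto-exact setting. Setting $\Sigma\!\operatorname{K}(\Mat_*)$ to be this spectrum, its zeroth space is $K(\Mat_*) = |BS_\bullet(\Mat_*)|$, which gives the first assertion.

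For the second assertion, connectedness follows directly. The $0$-skeleton of $|BS_\bullet(\Mat_*)|$ is $BS_0(\Mat_*)$, and $S_0(\mathcal{C})$ is the terminal groupoid because its only object is the zero object. So the realization has a single vertex and is connected. Being an infinite loop space, $K(\Mat_*) \simeq \Omega |BS^{(2)}|$ carries the loop-composition multiplication, which is unital up to homotopy. Alternatively, one can use the multiplication induced by $\oplus$ together with the Eckmann--Hilton argument to see that the two multiplications agree. In either case $K(\Mat_*)$ is an $H$-space in the homotopy category.

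The main obstacle is justifying the additivity theorem, or equivalently the deloopings, in the non-additive proto-exact setting. I would handle this by citing Dyckerhoff--Kapranov's treatment of $S_\bullet$ for proto-exact categories, together with the fact that $\Mat_*$ is a uniquely split proto-exact category in the sense of \cite{zbMATH07242437}. Failing that, I would fall back on the $\Gamma$-space built from the symmetric monoidal groupoids $S_n(\Mat_*)$ under $\oplus$. This gives an infinite loop structure on the group-complete realization, which agrees with $K(\Mat_*)$ because $K(\Mat_*)$ is connected.
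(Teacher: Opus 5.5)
Your argument goes through, but only along the route you call the fallback, and it is a different route from the paper's. The paper's proof is essentially a pointer to the literature. Since $\Mat_*$ has finite coproducts, the standard argument for exact categories applies, and the author cites Barwick's treatment of the infinite loop structure for the $Q$-construction, together with Hekking's comparison of the $Q$- and $S_\bullet$-constructions for proto-exact categories.

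Your primary route (iterated $S_\bullet$ plus additivity) has two steps that are not justified as stated.
\begin{itemize}
\item Iterating needs the \emph{category} $\mathcal{S}_n(\mathcal{C})\subset\Fun(\mathcal{T}_n,\mathcal{C})$ to be proto-exact, not the groupoid; a groupoid with a zero object is equivalent to a point. Waldhausen's notion of cofibration between such diagrams uses pushouts of admissible monomorphisms along admissible monomorphisms. Axiom (iv) only guarantees pushouts along admissible epimorphisms, so this would have to be set up and verified by hand. You should not present it as a citation.
\item Waldhausen's additivity theorem is proved for Waldhausen categories, which require pushouts of cofibrations along arbitrary maps. A proto-exact category need not have these, so ``carries over verbatim'' is not available.
\end{itemize}

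The $\Gamma$-space argument avoids both problems and is complete up to standard details, so lead with it.
\begin{itemize}
\item In the diagram model, pointwise $\oplus$ is the coproduct in each $\mathcal{S}_n(\Mat_*)$. Bicartesian squares are preserved because coproducts commute with pushouts, and by axiom (iii) cocartesian implies cartesian for squares of this shape. This is a cleaner justification than ``detected by exact sequences''.
\item Faces and degeneracies only delete or repeat rows and columns, so they preserve $\oplus$.
\item Segal's machine therefore gives a simplicial special $\Gamma$-space. Realization commutes with finite products, so after realizing you get a special $\Gamma$-space whose value at $1^+$ is $K(\Mat_*)$.
\item Connectedness, coming from $S_0$ being terminal, makes it grouplike and hence very special. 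Segal's theorem then gives the delooping with no additivity and no group completion.
\end{itemize}

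For the only place the lemma is used (trivial action of $\pi_1$ on $\pi_2$), even less is needed: levelwise $\oplus$ already gives $\lvert BS_\bullet\Mat_*\rvert$ a homotopy-unital multiplication.

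Compared with the paper, your $\Gamma$-space route is self-contained and uses nothing beyond the coproducts and $S_0$ being a point. The paper's route, via the $Q$-construction, identifies the delooping with the established $K$-theory spectrum. That matters for functoriality statements such as the closing conjecture, but not for the lemma itself.
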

\begin{proof}
Since $\Mat_*$ has finite coproducts, this can be shown exactly as for exact categories, where it is a standard fact. We refer the reader to \cite{Bar13} for a modern treatment of this fact for the $Q$-construction, and to the proof of Theorem 3.5.3 of \cite{hekking} for a write-up of the equivalence of $Q$- and $S_{\bullet}$-constructions in the setting of proto-exact categories.
\end{proof}
The more combinatorially minded readers, who might not necessarily be familiar with the notions of the above lemma, need not worry. This lemma will be used exactly once, at the end of section $4$, and all we need from it is that the fundamental group of the delooped $K$-theory space acts trivially on the second homotopy group, which is true for these so-called $H$-spaces.

\hfill

Finally, let us note that the $K$-theory and Grothendieck-Witt theory of proto-exact categories was also studied by Eberhardt, Lorscheid and Young in \cite{zbMATH07517045}.

\section{Computing the K-theory of Matroids}
In Theorem 6.3 of \cite{zbMATH07242437}, the authors compute $K_0(\Mat_*)$, using a description of $K_0$ for proto-exact categories with finite coproducts as the free abelian group of isomorphism classes of matroids, up to ``making short exact sequences split''. This is the usual definition of the Grothendieck group. Their result can be stated as follows:
\begin{thm}[\cite{zbMATH07242437}]
The map $(\rk, \lvert \widetilde{E}(-) \rvert - \rk) \colon K_0(\Mat_*) \to \Z^2$ is an isomorphism.
\end{thm}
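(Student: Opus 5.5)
We prove the theorem by producing explicit generators of $K_0(\Mat_*)$ and checking that the given map is a well-defined bijection onto $\Z^2$. We use the presentation of $K_0$ from Theorem 6.3 of \cite{zbMATH07242437} recalled above: it is the free abelian group on isomorphism classes $[M]$ of pointed matroids, modulo $[M] = [\left.M\right|\!A] + [M/A]$ for every admissible exact sequence $\left.M\right|\!A \hookrightarrow M \twoheadrightarrow M/A$, together with $[0]=0$. Equivalently, $K_0 = \pi_1 K(\Mat_*)$, which is generated by $S_1$ with relations coming from $S_2$ via Lemma \ref{Flag groupoid description}.

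\textbf{Step 1: the map is well defined.} For $A \subset \widetilde{E}(M)$ we need
\begin{equation*}
\rk(M) = \rk(\left.M\right|\!A) + \rk(M/A)
\quad\text{and}\quad
\lvert \widetilde{E}(M) \rvert = \lvert A \rvert + \lvert \widetilde{E}(M) \setminus A \rvert .
\end{equation*}
The second is clear. The first follows from the definition of independent sets of $M/A$: extending a basis $J$ of $\left.M\right|\!A$ to a basis of $M$ shows $\rk(M/A) = \rk(M) - \rk_M(A)$. Hence both coordinates are additive on exact sequences and $[0] \mapsto 0$, so the map descends to $K_0(\Mat_*)$.

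\textbf{Step 2: generators.} We show $K_0$ is generated by $[U_{1,1}]$ and $[U_{0,1}]$. Choose an ordering $e_1,\dots,e_n$ of $\widetilde{E}(M)$ and set $A_k=\{e_1,\dots,e_k\}$. Applying the relation successively to the flag $A_1 \subset \dots \subset A_{n-1} \subset M$, with Lemma \ref{contraction and restrictrion commute} identifying subquotients $(\left.M\right|\!A_{k})/A_{k-1}$ with $\left.(M/A_{k-1})\right|\!\{e_k\}$, gives
\begin{equation*}
[M]=\sum_{k=1}^n [(\left.M\right|\!A_k)/A_{k-1}].
\end{equation*}
Each summand is a pointed matroid with one non-base element, hence isomorphic to either $U_{1,1}$ or $U_{0,1}$.

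\textbf{Step 3: bijectivity.} The map sends $[U_{1,1}] \mapsto (1,0)$ and $[U_{0,1}] \mapsto (0,1)$. It is therefore surjective, and since the two generators map to a basis of $\Z^2$, it is injective. This proves the theorem.

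The only step requiring real care is the rank additivity in Step 1, i.e.\ that the contraction $M/A$ has rank $\rk(M)-\rk_M(A)$ independently of the chosen basis $J$. This is the standard fact that any basis of $\left.M\right|\!A$ extends to a basis of $M$, and it follows from the Steinitz exchange property.
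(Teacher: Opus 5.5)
Your proof is correct and follows essentially the same route as the paper. The paper gives no argument of its own and cites Eppolito, Jun and Szczesny, using the same presentation of $K_0(\Mat_*)$ as isomorphism classes modulo $[M]=[\left.M\right|\!A]+[M/A]$; your proof supplies the expected details: additivity of rank and reduced cardinality for well-definedness, and a single-element filtration showing that $[U_{1,1}]$ and $[U_{0,1}]$ generate.
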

Moreover, using the free-forgetful adjunction between $\Mat_*$ and $\FinSet_*$ from \cite{zbMATH06859742}, the authors establish the following result as Theorem 6.4 of loc.\ cit.\:
\begin{thm}[\cite{zbMATH07242437}]
There are injective group homomorphisms $\pi_n(\mathbb{S}) \hookrightarrow K_n(\Mat_*)$ for all non-negative integers $n$.
\end{thm}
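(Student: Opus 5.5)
The injection $\pi_n(\mathbb{S}) \hookrightarrow K_n(\Mat_*)$ should come from exhibiting $K(\FinSet_*)$ as a retract of $K(\Mat_*)$ by exact functors of proto-exact categories. By Barratt--Priddy--Quillen, $K(\FinSet_*)$ is the sphere spectrum, i.e.\ $K_n(\FinSet_*) \cong \pi_n(\mathbb{S})$. So it will suffice to construct exact functors
\[
\FinSet_* \xrightarrow{\ \mathrm{Free}\ } \Mat_* \xrightarrow{\ U\ } \FinSet_*
\]
whose composite is naturally isomorphic to the identity. Here $\mathrm{Free}$ sends a pointed finite set $S$ to the free pointed matroid on $S$, meaning the base point is a loop and every subset of $S \setminus \{*\}$ is independent. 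The functor $U$ is the forgetful functor from \cite{zbMATH06859742}. Since $U \circ \mathrm{Free} = \id$ on the nose, functoriality of the $S_\bullet$-construction gives maps $K(\FinSet_*) \to K(\Mat_*) \to K(\FinSet_*)$ with identity composite. On homotopy groups the first map is then split injective, which gives the claim.

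The main step is checking that both functors are exact, i.e.\ that they preserve the zero object, admissible monomorphisms, admissible epimorphisms and bicartesian squares of the relevant type.

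For $\mathrm{Free}$, a strong map between free matroids is just a pointed map, since every subset is a flat. An injection $S \hookrightarrow T$ goes to $\left.\mathrm{Free}(T)\right|\!S \hookrightarrow \mathrm{Free}(T)$, because the restriction of a free matroid is free. An admissible epimorphism $T \twoheadrightarrow T/S$ collapses $S$ to the base point and is bijective elsewhere. It goes to $\mathrm{Free}(T) \twoheadrightarrow \mathrm{Free}(T)/S$, because contracting an independent set in a free matroid leaves a free matroid on the complement.

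For $U$, restrictions go to inclusions of pointed sets. Contractions $M \to M/A$ go to maps collapsing $A$ to the base point and injective elsewhere, and these are exactly the admissible epimorphisms of $\FinSet_*$.

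It remains to check that bicartesian squares with two mono sides and two epi sides are sent to bicartesian squares. Every such square in $\Mat_*$ is, up to isomorphism, the square built from $A \subset B \subset \widetilde{E}(M)$ relating $\left.M\right|\!B$, $M$, $(\left.M\right|\!B)/A$ and $M/A$. Lemma \ref{contraction and restrictrion commute} identifies its corner. Its underlying square of pointed sets is the standard bicartesian square in $\FinSet_*$, and the analogous statement holds for $\mathrm{Free}$.

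I expect the main obstacle to be the bookkeeping showing that $\mathcal{M}$ and $\mathcal{E}$ in $\Mat_*$ are taken only up to isomorphism, so that every relevant square really has this standard form. I would handle it by reducing to the case of a distinguished ground-set inclusion, using the description of $\mathcal{M}$ and $\mathcal{E}$ from Section 5 of \cite{zbMATH07242437}.

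With exactness established, $\mathrm{Free}$ and $U$ induce simplicial maps $S_\bullet(\FinSet_*) \to S_\bullet(\Mat_*) \to S_\bullet(\FinSet_*)$ whose composite is the identity. Taking classifying spaces and geometric realizations gives the retraction of $K$-theory spaces. Applying $\pi_{n+1}$ and using $K_n(\FinSet_*) \cong \pi_n(\mathbb{S})$ then yields the injective homomorphisms $\pi_n(\mathbb{S}) \hookrightarrow K_n(\Mat_*)$. They are group homomorphisms because the maps are maps of connected $H$-spaces by Lemma \ref{Hspace}.
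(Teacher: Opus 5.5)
Your proposal is correct and takes essentially the same route as the cited result: the paper attributes it to Theorem 6.4 of \cite{zbMATH07242437}, which uses the free--forgetful adjunction to exhibit $K(\FinSet_*) \simeq \mathbb{S}$ as a retract of $K(\Mat_*)$, just as you do. The bookkeeping about bicartesian squares that you flag can be avoided. The forgetful functor has a left adjoint (free matroids) and a right adjoint (rank-zero matroids), so it preserves all existing pushouts and pullbacks; the free functor preserves pushouts, and axiom (iii) then makes the image squares bicartesian.
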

Putting these two theorems together, we see that at least for $n = 0$, the K-theory of matroids is not determined by the homotopy of the sphere spectrum. But this does raise the question of what happens in other degrees, as Jun, Sistko- and Wright ask in Remark 5.6 of \cite{zbMATH08208138}. Our goal for this section is to give a full answer in degree $1$ and, based on that, give our prediction about the remaining cases of $n \geq 2$.

\hfill

Throughout this section, all matroids are pointed, all strong maps preserve the base point (including automorphisms of matroids), and we let $X = K(\Mat_*)$ denote the (delooped) K-theory space of pointed matroids. As seen in the previous section, it is given as the geometric realization $\lvert BS_{\bullet} \!\Mat_* \rvert$, and its homotopy groups $\pi_{n+1}(X) = K_n(\Mat_*)$ are the algebraic K-theory groups of pointed matroids.

\hfill

The main tool we will be using is the homological geometric realization spectral sequence
$$E^1_{p,q} = H_q(BS_p\!\Mat_*) \Rightarrow H_{p+q}(X).$$
For this entire section, unless we explicitly state otherwise, whenever we write $E_{p,q}^r$ for some integers $p$, $q$ and $r$, we mean the terms of this spectral sequence. Here the differentials $d_{p,q}^1$ are given as the maps induced by the alternating sum over the face maps. For a detailed exposition of this spectral sequence, we refer the reader to Section 12.6.1 of the book project \cite{Mal26}.

The basic idea is to obtain enough information about $H_2(X)$ via the above spectral sequence and then use the Hurewicz map to compare with $\pi_2(X) = K_1(\Mat_*)$.

\hfill

The first group we wish to compute is $E_{1,1}^2$. To have a better understanding of it, let us start by giving a presentation in terms of automorphisms. 
\begin{prop}\label{E11 explicit}
The group $E_{1,1}^2$ is isomorphic to the quotient group
$$\Bigl( \bigoplus_{[M]} \Aut(M)^{\ab} \Bigr) / \langle [M,\alpha] - [\!\left.M\right|\!A,\left.\alpha\right|_A] - [M / A, \overline{\alpha}] \mid M \in \Mat_*, A \subset \widetilde{E}(M), \alpha \in \Aut(M;A) \rangle$$
where $[M]$ runs over all isomorphism classes of matroids $M$.
\end{prop}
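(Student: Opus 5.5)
By definition, $E^2_{1,1}$ is the homology of
$$E^1_{2,1} \xrightarrow{d^1} E^1_{1,1} \xrightarrow{d^1} E^1_{0,1},$$
where $E^1_{p,1} = H_1(BS_p\!\Mat_*)$. Since $S_0\!\Mat_* \simeq \mathcal{M}_0$ is the terminal groupoid, $E^1_{0,1} = H_1(\mathrm{pt}) = 0$. Hence $E^2_{1,1}$ is simply the cokernel of $d^1 \colon E^1_{2,1} \to E^1_{1,1}$. The plan is to identify both groups and this map concretely using Lemma \ref{Flag groupoid description}.

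First, for any groupoid $\mathcal{G}$, the classifying space $B\mathcal{G}$ is equivalent to $\coprod_{[x]} B\Aut(x)$, where $[x]$ runs over isomorphism classes of objects. Therefore $H_1(B\mathcal{G}) \cong \bigoplus_{[x]} \Aut(x)^{\ab}$.

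For $p=1$, Lemma \ref{Flag groupoid description} identifies $S_1\!\Mat_*$ with the groupoid $\mathcal{M}_1$ of pointed matroids and their isomorphisms. This gives
$$E^1_{1,1} \cong \bigoplus_{[M]} \Aut(M)^{\ab}.$$
For $p=2$, the same lemma identifies $S_2\!\Mat_*$ with $\mathcal{M}_2$, the groupoid of admissible monomorphisms $\left.M\right|\!A \hookrightarrow M$. An automorphism of such an object is a pair of automorphisms compatible with the inclusion, which is the same as an automorphism $\alpha$ of $M$ with $\alpha(A) = A$. This is the group $\Aut(M;A)$. Up to isomorphism, the objects are the pairs $(M,A)$ with $A \subset \widetilde{E}(M)$, so
$$E^1_{2,1} \cong \bigoplus_{[M;A]} \Aut(M;A)^{\ab}.$$

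Next I compute $d^1 = d_0 - d_1 + d_2$ on a generator $[M;A,\alpha]$, again using Lemma \ref{Flag groupoid description}:
\begin{enumerate}
\item[-] $d_2$ omits the second entry, so it sends the flag to $\left.M\right|\!A$ and $\alpha$ to its restriction $\left.\alpha\right|_A$.
\item[-] $d_1$ omits the first entry, so it sends the flag to $M$ and $\alpha$ to itself.
\item[-] $d_0$ sends the flag to the quotient $M/A$ and $\alpha$ to the induced automorphism $\overline{\alpha}$.
\end{enumerate}
On $H_1$, the face functors act on automorphism classes in exactly this way, since a functor between groupoids induces on $H_1$ the map on abelianized automorphism groups. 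Thus
$$d^1[M;A,\alpha] = [M/A,\overline{\alpha}] - [M,\alpha] + [\left.M\right|\!A, \left.\alpha\right|_A].$$
Up to sign, these are exactly the relations in the statement, so $E^2_{1,1} = \coker(d^1)$ has the claimed presentation.

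The main care needed is not conceptual. It lies in two checks. First, the equivalences $\mu_n$ must be compatible with the face maps at the level of morphisms, not only objects. For $d_0$ this means that the pushout isomorphism $F(1,2) \cong F(0,2)/F(0,1)$ used in the lemma is natural, so that the automorphism induced on $F(1,2)$ corresponds to $\overline{\alpha}$ on $M/A$. Second, $\overline{\alpha}$ must be well defined as an automorphism of $M/A$. This holds because $\alpha$ preserves $A$, and it is the map induced by functoriality of the quotient $M \twoheadrightarrow M/A$.

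Finally, replacing a representative within an isomorphism class changes a generator only by conjugation. Such a change is invisible after abelianization, so the presentation does not depend on the chosen representatives.
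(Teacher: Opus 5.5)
Your proposal is correct and follows the paper's proof essentially step by step. Like the paper, you kill $E^1_{0,1}$ because $S_0\!\Mat_*$ is terminal, identify $E^1_{1,1}$ and $E^1_{2,1}$ with sums of abelianized automorphism groups via Lemma~\ref{Flag groupoid description}, and read off $d^1_{2,1}$ from the three face maps; your extra checks (naturality of the pushout identification for $d_0$, well-definedness of $\overline{\alpha}$, independence of representatives) are details the paper leaves implicit and do not change the argument.
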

\begin{proof}
First note that since $BS_0\!\Mat_*$ is a point, we have $E_{0,1}^1 = 0$ and thus $E_{1,1}^2 = E_{1,1}^1/\im(d_{2,1}^1)$. Thus it suffices to determine those two groups making up this quotient. Lemma \ref{Flag groupoid description} tells us that the first one is
$$H_1(BS_1\!\Mat_*) \cong H_1\Biggl( \coprod_{[M]} B\!\Aut(M) \Biggr) \cong \bigoplus_{[M]} H_1(B\!\Aut(M)) \cong \bigoplus_{[M]} \pi_1(B\!\Aut(M))^{\ab} \cong \bigoplus_{[M]} \Aut(M)^{\ab}$$
where $[M]$ runs over all isomorphism classes of matroids $M$. By the same reasoning, we obtain
$$E_{2,1}^1 \cong \bigoplus_{[A \subset M]} \Aut(M;A)^{\ab}$$
where $[A \subset M]$ runs over all isomorphism classes of flags of length $1$. By another application of Lemma \ref{Flag groupoid description}, the three face maps send a flag $A \subset M$ to the matroids
$$d_0(M;A) = M/A, \hspace{5pt} d_1(M;A) = M, \hspace{5pt} d_2(M;A) = \left.M\right|\!A.$$
On the level of automorphisms, these induce the group homomorphisms
\begin{enumerate}
\item[] $\Aut(M;A) \to \Aut(M/A)$, $\alpha \mapsto \overline{\alpha}$,
\item[] $\Aut(M;A) \to \Aut(M)$, $\alpha \mapsto \alpha$,
\item[] $\Aut(M;A) \to \Aut(\!\left.M\right|\!A)$, $\alpha \mapsto \left.\alpha\right|\!A$
\end{enumerate}
where $\overline{\alpha}$ acts on $E(M/A) = E(M) \setminus A$ by the same underlying permutation as $\alpha$. In particular, the differential $d_{2,1}^1$ is given by
$$d_{2,1}^1([(M;A),\alpha]) = [M / A, \overline{\alpha}] - [M,\alpha] + [\!\left.M\right|\!A,\left.\alpha\right|_A]$$
for an element $\alpha \in \Aut(M;A)$. This proves the claim.
\end{proof}
This group might look familiar to the reader. For an exact category $\mathcal{C}$, this group is also known as Bass' $K_1^{\det}(\mathcal{C})$. It comes equipped with a natural map $K_1^{\det}(\mathcal{C}) \to K_1(\mathcal{C})$, but this map need not be an isomorphism. For more on this group we refer to \cite{zbMATH03358618}, \cite{zbMATH03278303}, \cite{zbMATH03447122} or to the introduction of \cite{zbMATH01156512}.

\hfill

To compute $E_{1,1}^2$ explicitly, we will make use of the following lemma.

\begin{lem}\label{E11 sgn}
There is a well-defined group homomorphism $E_{1,1}^2 \to \{ \pm 1 \}$ mapping the class of an automorphism $[M, \alpha]$ to the sign of the underlying permutation of $\alpha$.
\end{lem}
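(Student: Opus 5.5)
The plan is to define the homomorphism on the free level and then check that it kills the relations in the presentation from Proposition \ref{E11 explicit}.

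For each isomorphism class $[M]$ we use the group homomorphism $\Aut(M) \to \{\pm 1\}$ that sends $\alpha$ to the sign of the underlying permutation of $E(M)$. Since the base point is fixed by every automorphism, we may equally take the sign of the permutation of $\widetilde{E}(M)$. This map factors through $\Aut(M)^{\ab}$ because $\{\pm 1\}$ is abelian.

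It is also independent of the chosen representative of $[M]$. An isomorphism $\varphi \colon M \to M'$ conjugates $\Aut(M)$ to $\Aut(M')$, and conjugation by a bijection preserves the sign of a permutation. Summing over all classes, we obtain a homomorphism $\bigoplus_{[M]} \Aut(M)^{\ab} \to \{\pm 1\}$, written multiplicatively on the target.

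By Proposition \ref{E11 explicit}, it remains to show that every relation $[M,\alpha] - [\left.M\right|\!A,\left.\alpha\right|_A] - [M/A,\overline{\alpha}]$ maps to $1$. Here $\alpha \in \Aut(M;A)$ preserves $A$, so it also preserves the complement $\widetilde{E}(M)\setminus A$. As a permutation of $\widetilde{E}(M) = A \amalg (\widetilde{E}(M)\setminus A)$, $\alpha$ is therefore the disjoint product of two permutations:
\begin{enumerate}
\item[-] its restriction to $A$, which is the underlying permutation of $\left.\alpha\right|_A$ on $\widetilde{E}(\left.M\right|\!A) = A$;
\item[-] its restriction to the complement, which is the underlying permutation of $\overline{\alpha}$ on $\widetilde{E}(M/A) = \widetilde{E}(M)\setminus A$. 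This uses the description of $\overline{\alpha}$ from the proof of Proposition \ref{E11 explicit}.
\end{enumerate}
The sign is multiplicative over such a disjoint decomposition, so $\sgn(\alpha) = \sgn(\left.\alpha\right|_A)\,\sgn(\overline{\alpha})$. Hence each relation maps to $1$, and the map descends to $E_{1,1}^2$.

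The only point needing care is bookkeeping with the base point. The restriction $\left.M\right|\!A$ includes $*_M$, and the contraction $M/A$ keeps $*_M$ as its base point. Since the base point is fixed in all three cases, working with reduced ground sets makes the decomposition exact. I expect no real obstacle here; the argument is essentially the multiplicativity of the sign.
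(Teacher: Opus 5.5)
Your proposal is correct and follows essentially the same route as the paper. You define the sign on each $\Aut(M)^{\ab}$, sum over isomorphism classes, and check that each relation from Proposition~\ref{E11 explicit} dies because $\alpha$ is the disjoint product of $\left.\alpha\right|_A$ and $\overline{\alpha}$; the paper phrases this multiplicativity by concatenating transposition factorizations, and your remark that the sign is conjugation-invariant, hence independent of the chosen representatives, makes explicit a point the paper leaves implicit.
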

\begin{proof}
Let $M$ be a matroid with ground set $E(M)$ of cardinality $n$. As $\Aut(M)$ is a subgroup of $S_n$, we get a sign map $\sgn \colon \Aut(M) \to \{ \pm 1 \}$ by restricting the one on $S_n$. Now we factorize it through the abelianization $\Aut(M)^{\ab}$ and obtain a homomorphism $\bigoplus_{[M]} \Aut(M)^{\ab} \to \{ \pm 1 \}$ by taking the direct sum of those maps, which by abuse of notation we still denote by $\sgn$. Finally, we check that this sign map factorizes through the quotient with respect to the image of $d_{2,1}^1$. For this let $A \subset \widetilde{E}(M)$ and let $\alpha \in \Aut(M)$ with $\alpha(A) = A$. Then $\alpha$ also preserves $\widetilde{E}(M) \setminus A$, which gives rise to $\left.\alpha\right|_A$ and $\overline{\alpha}$. Now we factor each of these permutations into transpositions, say $\left.\alpha\right|_A = \tau_1 \cdot \dotsc \cdot \tau_r$ and $\overline{\alpha} = \tau'_1 \cdot \dotsc \cdot \tau'_s$, and extend them to $\widetilde{E}(M)$ by letting them act as the identity outside of their respective domains. This gives a factorization of $\alpha$ into $r + s$ transpositions and results in 
$$\sgn([M,\alpha]) = (-1)^{r + s} = (-1)^r(-1)^s = \sgn([\!\left.M\right|\!A,\left.\alpha\right|_A])\sgn([M / A, \overline{\alpha}]) = \sgn([\!\left.M\right|\!A,\left.\alpha\right|_A] + [M / A, \overline{\alpha}]).$$
Therefore, the sign map respects the relations from $\im(d_{2,1}^1)$ and we obtain the desired sign map on $E_{1,1}^2$.
\end{proof}
\begin{thm}\label{E11}
We have $E^2_{1,1} \cong \mathbb{Z}/2\mathbb{Z}$.
\end{thm}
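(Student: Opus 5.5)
The sign map of Lemma \ref{E11 sgn} gives a homomorphism $E^2_{1,1} \to \{\pm 1\} \cong \Z/2\Z$. It is surjective: the transposition swapping the two non-base elements of the free matroid $U_{2,2}$ is an automorphism of sign $-1$. So the plan is to show that $E^2_{1,1}$ is generated by one element of order at most $2$, or equivalently that every class $[M,\alpha]$ is determined by $\sgn(\alpha)$. Throughout I work with the presentation from Proposition \ref{E11 explicit}.

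\textbf{Step 1: reduce to free matroids.} Take $[M,\alpha]$ with $M$ non-free and $\alpha \in \Aut(M)$. By Lemma \ref{auto ext}, $\alpha$ extends to an automorphism $\beta$ of $M \times t$ fixing $t$, and $\beta$ preserves both $\{t\}$ and $\widetilde{E}(M)$. Lemma \ref{basic contr comp} then supplies two relations:
\begin{itemize}
\item Using $A=\{t\}$: $[M\times t,\beta] = [U_{1,1},\id] + [M,\alpha]$.
\item Using $A=\widetilde{E}(M)$: $[M\times t,\beta] = [\left.(M\times t)\right|\!\widetilde{E}(M),\alpha] + [U_{0,1},\id]$.
\end{itemize}
Identity classes vanish, since $[N,\id]$ is the identity element of the abelian group $\Aut(N)^{\ab}$. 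Hence $[M,\alpha] = [\left.(M\times t)\right|\!\widetilde{E}(M),\alpha]$. I need to identify this restriction. Its independent sets are $\mathcal{I}(M)\cup\{I\cup\{e\}\}$ with $I \in \mathcal{I}(M)$, so it is the free erection-type matroid, the truncation-lift of $M$, of rank $\rk(M)+1$.

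Iterating this replacement raises the rank by one each time, while the ground set and the permutation stay fixed. After finitely many steps the rank reaches $|\widetilde{E}(M)|$, and the matroid becomes free. Checking carefully that each step really increases the rank is the main technical point of this step. If it fails for some matroid, I would instead iterate coextensions and compare through the flag relation.

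\textbf{Step 2: free matroids.} Now $M = U_{n,n}$, whose automorphism group is the full symmetric group $S_n$, with abelianization generated by the class of a transposition, of order at most $2$. Any restriction or contraction of a free matroid is free. So the relation from Proposition \ref{E11 explicit} with $A$ equal to a two-element set $\{a,b\}$ and $\alpha = (a\,b)$ gives
$$[U_{n,n},(a\,b)] = [U_{2,2},(a\,b)] + [U_{n-2,n-2},\id] = [U_{2,2},(a\,b)].$$
Therefore every $[U_{n,n},\alpha]$ equals either $0$ or the single element $\tau = [U_{2,2},(a\,b)]$, according to $\sgn(\alpha)$. Moreover $2\tau = 0$, since $(a\,b)^2 = \id$.

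\textbf{Conclusion.} Combining the two steps, $E^2_{1,1}$ is generated by $\tau$ with $2\tau=0$, and the sign map sends $\tau$ to $-1$. Hence $E^2_{1,1} \cong \Z/2\Z$.

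I expect the main obstacle to be Step 1: correctly identifying $\left.(M\times t)\right|\!\widetilde{E}(M)$ and verifying that repeating the construction eventually produces a free matroid.
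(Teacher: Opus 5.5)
Your proposal is correct and follows the paper's proof essentially step for step. Both use the free-coextension trick with the two relations for $A=\{t\}$ and $A=\widetilde{E}(M)$ to pass to a free matroid, reduce every transposition to $[U_{2,2},(1\,2)]$, and detect nontriviality with the sign map of Lemma \ref{E11 sgn}. The rank increase you flagged as the main technical point follows at once from your own description of the independent sets. If $M$ is non-free, a basis $B$ of $M$ misses some $e \in \widetilde{E}(M)$, and $B \cup \{e\}$ is independent in $\left.(M\times t)\right|\!\widetilde{E}(M)$, so this matroid has rank $\rk(M)+1$ (the paper simply asserts this). No fallback argument is needed.
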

\begin{proof}
By Proposition \ref{E11 explicit}, the group $E_{1,1}^2$ agrees with the quotient group
$$\Bigl( \bigoplus_{[M]} \Aut(M)^{\ab} \Bigr) / \langle [M,\alpha] - [\left.M\right|\!A,\left.\alpha\right|_A] - [M / A, \overline{\alpha}] \mid M \in \Mat_*, A \subset \widetilde{E}(M), \alpha \in \Aut(M;A) \rangle.$$
To compute it, we show that any automorphism $\alpha$ of a matroid $M$ may be exchanged for the same underlying permutation on the free matroid on $\widetilde{E}(M)$. For this suppose $M$ is not a free matroid, i.e., $\rk(M) < \lvert \widetilde{E}(M) \rvert$. We consider the free coextension $M \times t$ of $M$ by a new element $t$ and use Lemma \ref{auto ext} to extend $\alpha$ to an automorphism $\beta$ of $M \times t$ fixing $t$. By Lemma \ref{basic contr comp}, this yields the relations
$$[M \times t, \beta] = [\left.(M \times t)\right|\!\{ t \}, \left.\beta\right|_{\{ t \}}] + [(M \times t) / \{ t \}, \overline{\beta}] = [U_{1,1}, \id] + [M, \alpha] = [M, \alpha]$$
and
\begin{align*}
[M \times t, \beta] & = [\left.(M \times t)\right|\!\widetilde{E}(M), \left.\beta\right|_{\widetilde{E}(M)}] + [(M \times t) / \widetilde{E}(M), \overline{\beta}] \\ & = [\left.(M \times t)\right|\!\widetilde{E}(M), \left.\beta\right|_{\widetilde{E}(M)}] + [U_{0,1}, \id] \\ & = [\left.(M \times t)\right|\!\widetilde{E}(M), \left.\beta\right|_{\widetilde{E}(M)}],
\end{align*}
from which we obtain $[M,\alpha] = [\left.(M \times t)\right|\!\widetilde{E}(M), \left.\beta\right|_{\widetilde{E}(M)}]$. Since those two matroids have the same ground set and we have $\rk(\!\left.(M \times t)\right|\!\widetilde{E}(M)) = \rk(M) + 1$, we can iterate this process until $[\alpha]$ agrees with the class of its underlying permutation on the free matroid on $\widetilde{E}(M)$. Now we consider a permutation $\sigma \in S_n \cong \Aut(U_{n,n})$. Since $S_n$ is generated by transpositions, we can write $\sigma = \tau_1 \cdot \dotsc \cdot \tau_r$ for some transpositions $\tau_1,\dotsc, \tau_r \in S_n$ and obtain
$$[U_{n,n},\sigma] = [U_{n,n},\tau_1 \cdot \dotsc \cdot \tau_r] = [U_{n,n}, \tau_1] + \dotsc + [U_{n,n}, \tau_r].$$
Therefore, it suffices to understand the class of transpositions. If $(i \,\, j) \in S_n$ is any transposition, the defining relations of $E_{1,1}^2$ give us
$$[U_{n,n},(i \,\, j)] = [\left.U_{n,n}\right|\!\{ i, j \}, \left.(i \,\, j)\right|_{\{ i, j \}}] + [U_{n,n} / \{ i, j \}, \overline{(i \,\, j)}] = [U_{2,2},(1 \,\, 2)] + [U_{n-2,n-2},\id] = [U_{2,2},(1 \,\, 2)].$$
So all transpositions define the same class in $E_{1,1}^2$. In particular, by what we have shown so far, the entire group $E_{1,1}^2$ is generated by $[U_{2,2},(1 \,\, 2)]$. Since
$$2[U_{2,2},(1 \,\, 2)] = [U_{2,2},(1 \,\, 2)^2] = [U_{2,2},\id] = 0,$$
we thus have $E_{1,1}^2 = 0$ or $E_{1,1}^2 \cong \mathbb{Z}/2\mathbb{Z}$. The former cannot be the case, as we can detect the non-triviality of the transposition via the sign homomorphism from Lemma \ref{E11 sgn}. Thus we have shown that $E_{1,1}^2 \cong \mathbb{Z}/2\mathbb{Z}$.
\end{proof}
What the above proof really does is show that we can increase the rank of any matroid until it is full without changing the associated class. Afterwards, we only need to work out the remaining combinatorics on the level of finite sets and their permutations, resulting in the desired computation.

\hfill

Next we will deal with the group $E_{2,0}^2$. By definition, we have
$$E_{1,0}^1 \cong \bigoplus_{[M]} \Z[M], \hspace{5pt} E_{2,0}^1 \cong \bigoplus_{[A \subset M]} \Z[M;A] \hspace{5pt}\text{ and }\hspace{5pt} E_{3,0}^1 \cong \bigoplus_{[A \subset B \subset M]} \Z[M;A,B]$$
where those direct sums run over all isomorphism classes of flags of lengths $0$, $1$ or $2$. As in the proof of Proposition \ref{E11 explicit}, the differentials can be computed as
$$d_{2,0}^1([M;A]) = [M/A] - [M] + [\!\left.M\right|\!A] \hspace{5pt}\text{ and }\hspace{5pt} d_{3,0}^1([M;A,B]) = [M/A; B \setminus A] - [M; B] + [M; A] - [\!\left.M\right|\!B; A]$$
by using Lemma \ref{Flag groupoid description}, which results in the following presentation.
\begin{prop}\label{E20 explicit}
The group $E_{2,0}^2  = \ker(d_{2,0}^1)/\im(d_{3,0}^1)$ is isomorphic to the quotient of the subgroup
$$\Biggl\{ \sum_{[A \subset M]}k_{[M;A]}[M;A] \,\Bigg| \sum_{[A \subset M]}k_{[M;A]}([M / A] - [M] + [\!\left.M\right|\!A]) = 0 \Biggr\} \subset \bigoplus_{[A \subset M]}\Z[M;A]$$
by the group generated by
$$[M/A;B \setminus A] - [M;B] + [M;A] - [\!\left.M\right|\!B;A]$$
for all matroids $M$ and all flags $A \subset B \subset M$ of length $2$.
\end{prop}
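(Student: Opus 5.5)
This statement is essentially a matter of unwinding definitions, so I will prove it by the same pattern as Proposition \ref{E11 explicit}, now in the row $q = 0$. First, I identify the $E^1$-terms. Since each $BS_p\!\Mat_*$ is the classifying space of a groupoid, its zeroth homology is the free abelian group on its connected components. By Lemma \ref{Flag groupoid description}, these components are the isomorphism classes of flags of length $p-1$. This gives the stated descriptions $E^1_{1,0} \cong \bigoplus_{[M]}\Z[M]$, $E^1_{2,0} \cong \bigoplus_{[A\subset M]}\Z[M;A]$ and $E^1_{3,0} \cong \bigoplus_{[A\subset B\subset M]}\Z[M;A,B]$.

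Second, I compute the differentials. The map $d^1_{p,0}$ is induced by $\sum_i(-1)^i d_i$, and on $H_0$ each face map just sends a component to the component of its image. I then read off the face maps from Lemma \ref{Flag groupoid description}. For $p=2$, the face $d_0$ gives $M/A$, $d_1$ omits the first entry and gives $M$, and $d_2$ omits the last entry and gives $\left.M\right|\!A$. This yields the formula for $d^1_{2,0}$. For $p=3$, the flag $A\subset B\subset M$ corresponds to $\left.M\right|\!A \hookrightarrow \left.M\right|\!B \hookrightarrow M$. Then $d_0$ gives the flag of quotients $B\setminus A \subset M/A$. Here I use Lemma \ref{contraction and restrictrion commute} to identify $(\left.M\right|\!B)/A$ with $\left.(M/A)\right|\!(B\setminus A)$, so that the quotient flag really is the restriction flag $[M/A;B\setminus A]$. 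Next, $d_1$ omits $A$ and gives $[M;B]$, $d_2$ omits $B$ and gives $[M;A]$, and $d_3$ omits $M$ and gives $[\left.M\right|\!B;A]$. Taking the alternating sum gives the stated formula for $d^1_{3,0}$.

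Finally, $E^2_{2,0}$ is by definition the homology $\ker(d^1_{2,0})/\im(d^1_{3,0})$ at the $(2,0)$ spot. The kernel is exactly the displayed subgroup, and the image is generated by the images of the basis elements $[M;A,B]$, which proves the claim.

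The only step needing care is the $d_0$ identification for $p=3$, namely that the quotient flag is isomorphic to the restriction flag inside $M/A$. Lemma \ref{contraction and restrictrion commute} settles the matroid identification, and compatibility of the quotient maps with the inclusions is immediate from the fact that both act as the identity on $B\setminus A$.
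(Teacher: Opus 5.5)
Your proposal is correct and follows the paper's own argument. You identify $E^1_{p,0}$ as free abelian groups on isomorphism classes of flags via Lemma \ref{Flag groupoid description}, read off $d^1_{2,0}$ and $d^1_{3,0}$ from the face maps, and take homology at the $(2,0)$ spot; your explicit appeal to Lemma \ref{contraction and restrictrion commute} for the $d_0$ face is a detail the paper leaves implicit here and only invokes later, in the proof of Proposition \ref{class of flag in terms of z}.
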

Actually, this presentation can be simplified a bit, which we will make use of:
\begin{lem}
In the above presentation, we may restrict to proper flags both for the generators and for the relations.
\end{lem}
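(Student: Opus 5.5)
The plan is to recognise the row $q = 0$ of the $E^1$-page as the unnormalized chain complex of a simplicial abelian group, and to compare it with its normalized version, in which degenerate simplices are discarded.

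Concretely, $E^1_{p,0} = H_0(BS_p\!\Mat_*) \cong \Z[\pi_0 S_p\!\Mat_*]$, and $d^1$ is the alternating sum of the face maps described in Lemma \ref{Flag groupoid description}. So $(E^1_{\bullet,0}, d^1)$ is the Moore complex of the simplicial abelian group $\Z[\pi_0 S_\bullet \!\Mat_*]$. The degeneracy maps insert an identity into a flag. Under the flag description, the degenerate $2$-simplices are therefore exactly the flags $A \subset M$ with $A = \emptyset$ or $A = \widetilde{E}(M)$. The degenerate $3$-simplices are exactly the flags $A \subset B \subset M$ with $A = \emptyset$, $A = B$ or $B = \widetilde{E}(M)$. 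Hence in degrees $2$ and $3$ the non-degenerate simplices are precisely the proper flags in the sense defined above.

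By the Dold--Kan correspondence (equivalently, the classical fact that the subcomplex $D_\bullet$ spanned by degenerate simplices is acyclic and $C_\bullet / D_\bullet$ is the normalized complex), the projection $E^1_{\bullet,0} \to E^1_{\bullet,0}/D_\bullet$ is a quasi-isomorphism. The quotient $E^1_{\bullet,0}/D_\bullet$ has, in degrees $2$ and $3$, bases given by the proper flags. Its differential sends a proper flag to the same formula as in Proposition \ref{E20 explicit}, with all non-proper terms deleted. This yields the claimed presentation with proper flags on both the generator and the relation side.

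To keep the argument self-contained, I would also verify the key step by hand. The generators $[0;\emptyset]$, $[M;\emptyset]$ and $[M;\widetilde{E}(M)]$ vanish in $E^2_{2,0}$, using the explicit boundaries
\begin{align*}
d^1_{3,0}([0;\emptyset,\emptyset]) &= [0;\emptyset],\\
d^1_{3,0}([M;\emptyset,\emptyset]) &= [M;\emptyset] - [0;\emptyset],\\
d^1_{3,0}([M;\widetilde{E}(M),\widetilde{E}(M)]) &= [0;\emptyset] - [M;\widetilde{E}(M)].
\end{align*}
Hence every cycle is homologous to a cycle supported on proper flags. It then remains to check two things. First, the boundary of a degenerate length-$2$ flag lies in the span of degenerate length-$1$ flags, which follows from the simplicial identities. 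Second, the proper part of the boundary of a proper length-$2$ flag is the relation stated in the lemma.

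The main obstacle is only bookkeeping. One must make sure that the cycle condition is unaffected, because $d^1_{2,0}$ of a degenerate flag is $0$, as $[M/\emptyset] - [M] + [0] = [0] = 0$ in $\Z[\pi_0]$ modulo the basepoint degree-$1$ term, or handled via normalization. One must also confirm that the identification of degenerate simplices with non-proper flags is exactly right.
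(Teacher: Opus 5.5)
Your main argument is correct and takes a different route from the paper. You identify the row $q=0$ of the $E^1$-page with the Moore complex of $\Z[\pi_0 S_\bullet\!\Mat_*]$ and note that proper flags are exactly the non-degenerate simplices. You then quote the normalization theorem, i.e.\ the acyclicity of the degenerate subcomplex $D_\bullet$.

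The paper does the low-degree case of that theorem by hand:
\begin{enumerate}
\item[(a)] it shows every non-proper generator is congruent to $[U_{0,0};\emptyset]$ modulo $\im(d^1_{3,0})$;
\item[(b)] it kills the coefficient of $[U_{0,0};\emptyset]$ in a cycle by applying $d^1_{2,0}$;
\item[(c)] it compares coefficients, using that $d^1_{3,0}$ sends non-proper (resp.\ proper) length-$2$ flags to combinations of non-proper (resp.\ proper) flags.
\end{enumerate}
Your route explains why these computations work and gives the analogous statement in every degree. The paper's route is elementary and self-contained.

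State one point explicitly rather than hiding it in ``non-proper terms deleted''. For a proper flag, none of $[M/A]$, $[M]$, $[\left.M\right|\!A]$ equals $[U_{0,0}]$. Likewise, every term of $d^1_{3,0}$ of a proper length-$2$ flag is proper. So the deletion is vacuous. Hence being a cycle in $E^1_{1,0}/D_1 = E^1_{1,0}/\Z[U_{0,0}]$ is the same as the cycle condition in $E^1_{1,0}$ used in the presentation.

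Your ``by hand'' check, however, is wrong in two places and would not stand on its own.
\begin{enumerate}
\item[(i)] $d^1_{3,0}([U_{0,0};\emptyset,\emptyset]) = [U_{0,0};\emptyset] - [U_{0,0};\emptyset] + [U_{0,0};\emptyset] - [U_{0,0};\emptyset] = 0$, not $[U_{0,0};\emptyset]$; this is the boundary of the totally degenerate $3$-simplex. In fact $[U_{0,0};\emptyset]$ cannot be a boundary at all, since $d^1_{2,0}([U_{0,0};\emptyset]) = [U_{0,0}] \neq 0$ in $E^1_{1,0} = \bigoplus_{[M]}\Z[M]$. For the same reason $[M;\emptyset]$ and $[M;\widetilde{E}(M)]$ are not cycles, so ``they vanish in $E^2_{2,0}$'' has no meaning.
\item[(ii)] Your closing remark that $d^1_{2,0}$ of a degenerate flag is $0$ is false in the unnormalized complex; it equals $[U_{0,0}]$.
\end{enumerate}

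Your other two boundary formulas only show that every cycle is congruent to $x' + k[U_{0,0};\emptyset]$, with $x'$ supported on proper flags. The missing step is the paper's. Apply $d^1_{2,0}$ to get $d^1_{2,0}(x') + k[U_{0,0}] = 0$. Since $d^1_{2,0}(x')$ has no $[U_{0,0}]$-component, $k=0$. With this fix, or by relying on the normalization theorem alone, the argument is complete.
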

\begin{proof}
Let $M$ be a matroid. The two non-proper generators associated with $M$ are $[M; \emptyset]$ and $[M;\widetilde{E}(M)]$. Since
$$d_{3,0}^1([M;\emptyset,\emptyset]) = [M/\emptyset;\emptyset] - [M;\emptyset] + [M;\emptyset] - [\!\left.M\right|\!\emptyset;\emptyset] = [M;\emptyset] - [U_{0,0};\emptyset]$$
and
\begin{align*}
d_{3,0}^1([M;\widetilde{E}(M),\widetilde{E}(M)]) & = [M/\widetilde{E}(M);\emptyset] - [M;\widetilde{E}(M)] + [M;\widetilde{E}(M)] - [\!\left.M\right|\!\widetilde{E}(M);\widetilde{E}(M)] \\ & = [U_{0,0};\emptyset] - [M;\widetilde{E}(M)],
\end{align*}
these are identified with $[U_{0,0};\emptyset]$ for any matroid $M$. In particular, any cycle $x \in \ker(d_{2,0}^1)$ satisfies
$$x \equiv x' + k[U_{0,0};\emptyset] \hspace{-5pt}\mod \im(d_{3,0}^1)$$
for a $\Z$-linear combination $x'$ of classes of proper flags and some integer $k$. As $\im(d_{3,0}^1) \subset \ker(d_{2,0}^1)$, the right-hand side is a cycle. Hence
$$0 = d_{2,0}^1(x' + k[U_{0,0};\emptyset]) = d_{2,0}^1(x') + k d_{2,0}^1([U_{0,0};\emptyset]) = d_{2,0}^1(x') + k[U_{0,0}].$$
But for a proper flag $A \subset M$, none of the terms $[M/A]$, $[M]$ or $[\!\left.M\right|\!A]$ can be $[U_{0,0}]$ as the respective reduced ground sets are all non-empty. This forces $k = 0$, so that $x'$ is a cycle made out of classes of proper flags representing the same class as $x$.

For the three kinds of non-proper flags of length $2$ we have
$$d_{3,0}^1([M;\emptyset,B]) = [M/\emptyset;B \setminus \emptyset] - [M; B] + [M;\emptyset] - [\!\left.M\right|\!B; \emptyset] = [M;\emptyset] - [\!\left.M\right|\!B;\emptyset],$$
$$d_{3,0}^1([M;A,A]) = [M/A;A \setminus A] - [M;A] + [M;A] - [\!\left.M\right|\!A; A] = [M/A; \emptyset] - [\!\left.M\right|\!A;A],$$
\begin{align*}
d_{3,0}^1([M;A,\widetilde{E}(M)]) & = [M/A; \widetilde{E}(M) \setminus A] - [M; \widetilde{E}(M)] + [M;A] - [\!\left.M\right|\!\widetilde{E}(M);A] \\ & = [M/A;\widetilde{E}(M) \setminus A] - [M;\widetilde{E}(M)].
\end{align*}
So the images under $d_{3,0}^1$ of classes of non-proper flags only make use of non-proper flags, and similarly, images under $d_{3,0}^1$ of classes of proper flags only involve proper flags. If a cycle $x \in \ker(d_{2,0}^1)$ built from proper flags lies in the image of $d_{3,0}^1$, we write it as
$$x = d_{3,0}^1(y) + d_{3,0}^1(y')$$
where $y$ is made up of classes of proper flags and $y'$ is made up of classes of non-proper flags. By the above computations, so are the respective images and we obtain $d_{3,0}^1(y') = 0$ by comparing coefficients. In particular, all relations between classes built from proper flags arise from images of classes of proper flags, finishing the proof.
\end{proof}
We will show that the group $E_{2,0}^2$ is cyclic of infinite order, i.e., that it is isomorphic to $\Z$. For this, let us already define the element that will turn out to be a generator of $E_{2,0}^2$. It is the element
$$x_0 = [U_{0,1} \oplus U_{1,1}; \{ f \}] - [U_{0,1} \oplus U_{1,1}; \{ e \}] \in E_{2,0}^1$$
where $\{ e,f \} = \{ e \} \copro \, \{ f \}$ denotes the reduced ground set of the matroid $U_{0,1} \oplus U_{1,1}$. Here $e$ is a loop, that is, it is not contained in any independent set, while $\{ f \}$ is an independent set. We quickly verify that $x_0$ indeed gives rise to an element in $E_{2,0}^2$.
\begin{lem}
The element $x_0$ defines an element in $E_{2,0}^2$. 
\end{lem}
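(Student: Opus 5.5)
To see that $x_0$ defines an element of $E_{2,0}^2$, I only need to check that it is a cycle, that is, $d_{2,0}^1(x_0) = 0$. Once that holds, its class in $\ker(d_{2,0}^1)/\im(d_{3,0}^1)$ is defined. I will use the formula $d_{2,0}^1([M;A]) = [M/A] - [M] + [\!\left.M\right|\!A]$ from just before Proposition \ref{E20 explicit}. Write $M = U_{0,1} \oplus U_{1,1}$ with reduced ground set $\{e,f\}$, where $e$ is a loop and $f$ is a coloop. Both flags have the same middle term $[M]$, so these terms cancel in the difference. What remains is
$$d_{2,0}^1(x_0) = [M/\{f\}] + [\!\left.M\right|\!\{f\}] - [M/\{e\}] - [\!\left.M\right|\!\{e\}].$$

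Next I identify the four matroids, using the closure operators or the flats together with the description of the direct sum.

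\begin{enumerate}
\item[(i)] The restriction $\left.M\right|\!\{f\}$ has independent set $\{f\}$, so it is $U_{1,1}$.
\item[(ii)] The restriction $\left.M\right|\!\{e\}$ consists of a single loop besides the base point, so it is $U_{0,1}$.
\item[(iii)] For $M/\{f\}$, the closure is $\cl_M(\{f\}) = \{*, f\}$, since $e$ is a loop and already lies in $\cl_M(\emptyset)$. Hence $\cl_{M/\{f\}}(\emptyset) = \cl_M(\{f\}) \setminus \{f\} \ni e$, so $e$ is a loop of the contraction and $M/\{f\} = U_{0,1}$.
\item[(iv)] For $M/\{e\}$, we have $\cl_M(\{e\}) = \{*, e\}$, which does not contain $f$. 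So $f$ stays independent in the contraction and $M/\{e\} = U_{1,1}$.
\end{enumerate}

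Substituting these, the expression becomes $[U_{0,1}] + [U_{1,1}] - [U_{1,1}] - [U_{0,1}] = 0$. This shows that $x_0 \in \ker(d_{2,0}^1)$, and hence $x_0$ defines an element of $E_{2,0}^2$. The calculation is routine; the only point needing care is that the isomorphism classes are pointed, with the base point kept as a loop throughout.
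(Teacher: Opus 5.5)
Your proof is correct and follows the paper's argument: apply $d_{2,0}^1$, cancel the two copies of $[U_{0,1} \oplus U_{1,1}]$, and identify the restrictions and contractions as $U_{1,1}, U_{0,1}$ (for $\{f\}$) and $U_{0,1}, U_{1,1}$ (for $\{e\}$), so that all terms cancel. One small slip in (iii): since $e \in \cl_M(\emptyset)$, monotonicity gives $\cl_M(\{f\}) = \{*, e, f\}$, not $\{*, f\}$, and your next step ``$\ni e$'' in fact uses this corrected set.
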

\begin{proof}
We apply the differential and obtain
\begin{align*}
d_{2,0}^1(x_0) & = d_{2,0}^1([U_{0,1} \oplus U_{1,1}; \{ f \}] - [U_{0,1} \oplus U_{1,1}; \{ e \}]) \\
& = [(U_{0,1} \oplus U_{1,1})/\{ f \}] - [U_{0,1} \oplus U_{1,1}] + [\!\left.(U_{0,1} \oplus U_{1,1})\right|\!\{ f \}] \\ & - [(U_{0,1} \oplus U_{1,1})/\{ e \}] + [U_{0,1} \oplus U_{1,1}] - [\!\left.(U_{0,1} \oplus U_{1,1})\right|\!\{ e \}] \\ & = [U_{0,1}] - [U_{0,1} \oplus U_{1,1}] + [U_{1,1}] - [U_{1,1}] + [U_{0,1} \oplus U_{1,1}] - [U_{0,1}] \\ & = 0,
\end{align*}
so that $x_0$ yields a class $\overline{x_0} \in \ker(d_{2,0}^1)/\im(d_{3,0}^1) = E_{2,0}^2$.
\end{proof}
So we have a special element to work with. Next we need to figure out how to relate it to elements $\overline{[M;A]} \in E_{2,0}^1/\im(d_{3,0}^1)$. For this we observe that we can reduce the size of $A$ using the relations stemming from the differential $d_{3,0}^1$. Picking an element $e_1 \in A$, we get
$$[M;A] \equiv [M;\{ e_1 \}] + [M/\{ e_1 \}; A \setminus \{ e_1 \}] - [\!\left.M\right|\!A; \{ e_1 \}]  \hspace{-5pt}\mod \im(d_{3,0}^1).$$
Here the first and the last terms make use of singletons and are hence a lot simpler, and the term in the middle also uses a subset of smaller size. Therefore, we can iterate this for the middle term, resulting in rather simple terms after enough iterations. Thus it makes sense to group all the singleton terms arising in this process, which we will do now. For any isomorphism class of matroids $[M]$ we choose a representative $M$ together with an ordering $\widetilde{E}(M) = \{ e_1, \dotsc, e_n \}$ of its reduced ground set. This allows us to define a group homomorphism
$$z \colon E_{1,0}^1 \cong \bigoplus_{[M]} \Z[M] \to \bigoplus_{[A \subset M]} \Z[M;A] \cong E_{2,0}^1$$
by mapping a basis element $[M]$ to $z(M;e_1,\dotsc,e_n) = \sum_{i = 1}^{n-1} [M/\{ e_1,\dotsc,e_{i-1} \}; \{ e_i \}]$ if $\lvert \widetilde{E}(M) \rvert \geq 2$ and to $0$ if $\lvert \widetilde{E}(M) \rvert \leq 1$. As we chose an ordering of the $e_i$, we now investigate how much the map $z$ depends on this ordering.
\begin{prop}
For any permutation $\sigma \in S_n$, we have 
$$z(M;e_1,\dotsc,e_n) \equiv z(M;e_{\sigma(1)},\dotsc,e_{\sigma(n)}) \hspace{-5pt}\mod \im(d_{3,0}^1) + \Z \cdot \, x_0.$$
\end{prop}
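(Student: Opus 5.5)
The plan is to reduce to adjacent transpositions and then to a computation on matroids with two-element reduced ground set. Since $S_n$ is generated by the adjacent transpositions $(i \,\, i+1)$, and the relation ``$\equiv \bmod \im(d_{3,0}^1) + \Z \cdot x_0$'' is an equivalence relation, it suffices to show that swapping $e_i$ and $e_{i+1}$ in the ordering changes $z(M;e_1,\dotsc,e_n)$ only by an element of $\im(d_{3,0}^1) + \Z \cdot x_0$. For $n \leq 1$ there is nothing to show, since $z = 0$ by definition, so we assume $n \geq 2$ and $1 \leq i \leq n-1$.

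Put $N = M/\{ e_1,\dotsc,e_{i-1} \}$, $a = e_i$ and $b = e_{i+1}$. The summands of $z$ with index $j < i$ involve only $e_1,\dotsc,e_{j}$, so they are unchanged by the swap. The summands with index $j > i+1$ involve the contraction by $\{ e_1,\dotsc,e_{j-1} \}$, which is the same set in either ordering, so they are unchanged as well. Hence the difference is
$$\bigl([N;\{ a \}] + [N/\{ a \};\{ b \}]\bigr) - \bigl([N;\{ b \}] + [N/\{ b \};\{ a \}]\bigr)$$
when $i+1 \leq n-1$. When $i+1 = n$, the summand with index $n$ is absent, and the difference is just $[N;\{ a \}] - [N;\{ b \}]$, where $\widetilde{E}(N) = \{ a,b \}$.

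In the first case, apply the formula for $d_{3,0}^1$ to the flag $\{ a \} \subset \{ a,b \} \subset N$. This gives
$$[N;\{ a \}] + [N/\{ a \};\{ b \}] \equiv [N;\{ a,b \}] + [\!\left.N\right|\!\{ a,b \};\{ a \}] \bmod \im(d_{3,0}^1),$$
and the same with $a$ and $b$ interchanged. The terms $[N;\{ a,b \}]$ cancel, so the difference becomes $[L;\{ a \}] - [L;\{ b \}]$ with $L = \left.N\right|\!\{ a,b \}$. In the second case we already have this form with $L = N$. So everything reduces to the following claim: for every pointed matroid $L$ with $\widetilde{E}(L) = \{ a,b \}$, the element $[L;\{ a \}] - [L;\{ b \}]$ lies in $\Z \cdot x_0$. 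Note that this holds in $E_{2,0}^1$ itself, not only modulo $\im(d_{3,0}^1)$.

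This final step is where the actual content lies. The plan is a case analysis of pointed matroids on two reduced elements up to isomorphism: $U_{0,2}$, $U_{1,2}$, $U_{2,2}$ and $U_{0,1} \oplus U_{1,1}$. One should double-check that this list is complete, i.e.\ that these are exactly the possibilities for which of $a,b$ are loops and whether they are parallel.
\begin{enumerate}
\item[(i)] In the first three cases, the transposition $(a \,\, b)$ is an automorphism of $L$. It carries the flag $\{ a \} \subset L$ to the flag $\{ b \} \subset L$, so $[L;\{ a \}] = [L;\{ b \}]$ and the difference is $0$.
\item[(ii)] In the last case, one of $a,b$ is the loop $e$ and the other is the coloop $f$. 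Then $[L;\{ a \}] - [L;\{ b \}] = \pm x_0$, directly from the definition of $x_0$.
\end{enumerate}
Combining the two cases proves the claim, and with it the proposition.
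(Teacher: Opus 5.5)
Your proof is correct and follows essentially the paper's own argument. You reduce to adjacent transpositions, use the $d_{3,0}^1$-relation for the flag $\{ a \} \subset \{ a,b \} \subset N$ to turn the difference into $[L;\{ a \}] - [L;\{ b \}]$ for a pointed matroid $L$ with two-element reduced ground set, and finish with the case analysis over $U_{0,2}$, $U_{1,2}$, $U_{2,2}$ and $U_{0,1} \oplus U_{1,1}$. Your separate treatment of a swap at the last position, where there is no index-$n$ summand, only makes explicit what the paper covers with its two-element computation.
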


\begin{proof}
If $\widetilde{E}(M)$ consists of two elements $e_1$ and $e_2$, the defining formula of $z$ directly yields
$$z(M;e_1,e_2) - z(M;e_2,e_1) = [M;\{e_1\}] - [M;\{e_2\}].$$
We will now show that this is true more generally when swapping only the first two elements of $\widetilde{E}(M)$ for any matroid $M$, at least when working mod $\im(d_{3,0}^1)$. For this we let $M$ be a matroid with $\lvert \widetilde{E}(M) \rvert \geq 3$ and swap $e_1$ with $e_2$. Then we have
$$z(M;e_2,e_1,e_3,\dotsc,e_n) = [M;\{ e_2\}] + [M/\{ e_2 \}; \{ e_1\}] + \sum_{i = 3}^{n-1} [M/\{ e_1,\dotsc,e_{i-1} \}; \{ e_i \}]$$
since the swap does not matter once both $e_1$ and $e_2$ have been contracted. In particular, we obtain
$$z(M;e_1,\dotsc,e_n) - z(M;e_2,e_1,e_3,\dotsc,e_n) = [M;\{ e_1 \}] + [M/\{ e_1 \}; \{ e_2\}] - [M;\{ e_2 \}] - [M/\{ e_2 \}; \{ e_1\}].$$
Using the explicit formula for the differential $d_{3,0}^1$, we see that in the quotient $E_{2,0}^1/\im(d_{3,0}^1)$, the first two summands agree with 
$$[M;\{ e_1,e_2\}] + [\!\left.M\right|\!\{ e_1,e_2\}; \{ e_1 \}],$$
and the latter two summands agree with
$$-([M;\{ e_1,e_2\}] + [\!\left.M\right|\!\{ e_1,e_2\}; \{ e_2 \}]).$$
Thus we have
$$z(M;e_1,\dotsc,e_n) - z(M;e_2,e_1,e_3,\dotsc,e_n) \equiv [\!\left.M\right|\!\{ e_1,e_2\}; \{ e_1 \}] - [\!\left.M\right|\!\{ e_1,e_2\}; \{ e_2 \}] \hspace{-5pt}\mod \im(d_{3,0}^1),$$
as claimed above. Exchanging two adjacent $e_i$ at a later position leaves earlier and even later summands unchanged, because the respective contracted sets agree. Only the exchanged elements contribute to the difference, computed as above. Since $\lvert\widetilde{E}((\!\left.M / \{ e_1,\dotsc, e_{i-1} \}) \right|\!\{ e_i,e_{i+1}\}) \rvert = 2$, matroids of this form must be isomorphic to one of the matroids
$$U_{0,2}, \hspace{5pt} U_{1,2}, \hspace{5pt} U_{2,2} \hspace{5pt} \text{ or } \hspace{5pt} U_{0,1} \oplus U_{1,1}.$$
For the first three, swapping the two elements is an automorphism, which hence results in no difference. For $U_{0,1} \oplus U_{1,1}$, the difference is by definition given by $\pm x_0$. As any permutation $\sigma \in S_n$ can be written as a product of adjacent transpositions, this yields
$$z(M;e_1,\dotsc,e_n) \equiv z(M;e_{\sigma(1)},\dotsc,e_{\sigma(n)}) \hspace{-5pt}\mod \im(d_{3,0}^1) + \Z \cdot \,x_0,$$
which is what we wanted to show.
\end{proof}
Due to this proposition, we will allow ourselves to write $z(M)$ instead of $z(M;e_1,\dotsc,e_n)$ as long as we work mod $\im(d_{3,0}^1) + \Z \cdot \, x_0$. As the motivation for the map $z$ hinted at, these kinds of elements allow us to express the class of a flag $[M;A]$ in terms of $M$, its restriction to $A$ and its contraction with respect to $A$ as follows.
\begin{prop}\label{class of flag in terms of z}
For any matroid $M$ and any non-empty proper subset $A \subset \widetilde{E}(M)$, we have
$$[M;A] \equiv z(M) - z(\!\left.M\right|\!A) - z(M/A) \hspace{-5pt}\mod \im(d_{3,0}^1) + \Z \cdot \, x_0.$$
\end{prop}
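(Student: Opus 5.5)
The plan is to compute all three $z$-terms with one well-chosen ordering and compare them directly. By the previous proposition, modulo $\im(d_{3,0}^1) + \Z \cdot x_0$ the element $z(M)$ does not depend on the chosen ordering of $\widetilde{E}(M)$. Since the classes $[M';\{e\}]$ only depend on isomorphism classes, the same holds for $z(\!\left.M\right|\!A)$ and $z(M/A)$ with any orderings of $A$ and of $\widetilde{E}(M) \setminus A$. So let $k = \lvert A \rvert$ and $n = \lvert \widetilde{E}(M) \rvert$, so that $1 \leq k \leq n-1$. I choose an ordering $e_1, \dotsc, e_n$ of $\widetilde{E}(M)$ with $A = \{ e_1, \dotsc, e_k \}$. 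I use the induced orderings on $A$ and on $\widetilde{E}(M/A) = \{ e_{k+1}, \dotsc, e_n \}$.

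First I split $z(M;e_1,\dotsc,e_n) = \sum_{i=1}^{n-1}[M/\{e_1,\dotsc,e_{i-1}\};\{e_i\}]$ into the summands with $i \leq k$ and those with $k+1 \leq i \leq n-1$. Because $k \leq n-1$, all summands with $i \leq k$ actually occur. For $i \geq k+1$ we have $M/\{e_1,\dotsc,e_{i-1}\} = (M/A)/\{e_{k+1},\dotsc,e_{i-1}\}$, since contracting in stages is the same as contracting the union (this is immediate from the closure operator description). Hence the second part is exactly $z(M/A;e_{k+1},\dotsc,e_n)$. This includes the degenerate case $n-k = 1$, where both sides are $0$. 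It therefore remains to show
$$\sum_{i=1}^{k}[M/\{e_1,\dotsc,e_{i-1}\};\{e_i\}] \equiv [M;A] + z(\!\left.M\right|\!A;e_1,\dotsc,e_k) \hspace{-5pt}\mod \im(d_{3,0}^1).$$

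I prove this by induction on $k$, with $M$ and $A$ arbitrary subject to $A$ being a proper subset. For $k=1$ the left side is $[M;\{e_1\}]$ and $z(\!\left.M\right|\!A) = 0$, so the claim holds. For $k \geq 2$, I use the relation $d_{3,0}^1([M;\{e_1\},A])$, which gives
$$[M;A] \equiv [M;\{e_1\}] + [M/\{e_1\};A\setminus\{e_1\}] - [\!\left.M\right|\!A;\{e_1\}].$$
I then apply the induction hypothesis to $M/\{e_1\}$ and the non-empty proper subset $A \setminus \{e_1\}$, which has $k-1$ elements. This expresses $\sum_{i=2}^{k}[M/\{e_1,\dotsc,e_{i-1}\};\{e_i\}]$ as $[M/\{e_1\};A\setminus\{e_1\}] + z(\!\left.(M/\{e_1\})\right|\!(A\setminus\{e_1\}))$. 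By Lemma \ref{contraction and restrictrion commute}, $\left.(M/\{e_1\})\right|\!(A\setminus\{e_1\}) = (\!\left.M\right|\!A)/\{e_1\}$.

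Combining the two displays, the left side becomes $[M;A] + [\!\left.M\right|\!A;\{e_1\}] + z((\!\left.M\right|\!A)/\{e_1\};e_2,\dotsc,e_k)$. By the defining formula of $z$, this last expression equals $[M;A] + z(\!\left.M\right|\!A;e_1,\dotsc,e_k)$, because the first summand of $z(\!\left.M\right|\!A)$ is $[\!\left.M\right|\!A;\{e_1\}]$ and the remaining summands are those of $z((\!\left.M\right|\!A)/\{e_1\})$. This closes the induction.

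The main obstacle is bookkeeping rather than ideas. One point is the edge cases where $z$ is defined to be $0$, namely $\lvert A \rvert = 1$ and $\lvert \widetilde{E}(M/A) \rvert = 1$. The other is making sure that replacing the fixed representatives and orderings used to define $z$ by the ones chosen here only costs elements of $\im(d_{3,0}^1) + \Z \cdot x_0$, which is exactly what the previous proposition provides.
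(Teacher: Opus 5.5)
Your proof is correct and is essentially the paper's argument: unrolling your induction on $\lvert A \rvert$ produces exactly the relations $d_{3,0}^1([M/\{e_1,\dotsc,e_{i-1}\};\{e_i\},A\setminus\{e_1,\dotsc,e_{i-1}\}])$ for $1 \leq i \leq m-1$, which the paper sums directly as a telescoping sum, with the same use of Lemma \ref{contraction and restrictrion commute} and the same splitting of $z(M)$ into the first $m$ summands plus $z(M/A)$. Your explicit remark that changing representatives and orderings costs only elements of $\im(d_{3,0}^1) + \Z \cdot x_0$ makes precise a step the paper leaves implicit.
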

\begin{proof}
For a proper flag $A \subset M$, we choose an ordering $\widetilde{E}(M) = \{ e_1,\dotsc,e_n \}$ so that the elements of the subset $A = \{ e_1,\dotsc, e_m \}$ come first, where $1 \leq m \leq n - 1$. Now we claim that
$$[M;A] \equiv z(M; e_1,\dotsc, e_n) - z(\!\left.M\right|\!A; e_1,\dotsc,e_m) - z(M/A;e_{m+1},\dotsc,e_n) \hspace{-5pt}\mod \im(d_{3,0}^1).$$
We consider the proper flag 
$$\{ e_i \} \subset A \setminus \{ e_1, \dotsc, e_{i-1} \} \subset M / \{ e_1, \dotsc, e_{i-1} \}$$
for all $1 \leq i \leq m-1$. Its image under $d_{3,0}^1$ is given by the sum of
$$[M / \{ e_1, \dotsc, e_i \}; A \setminus \{ e_1, \dotsc, e_i \}] - [M / \{ e_1, \dotsc, e_{i-1} \}; A \setminus \{ e_1, \dotsc, e_{i - 1} \}]$$
and
$$[M / \{ e_1, \dotsc, e_{i-1} \}; \{ e_i \}] - [\!\left.(M\right|\!A) / \{ e_1, \dotsc, e_{i - 1} \}; \{ e_i \}]$$
where we made use of Lemma \ref{contraction and restrictrion commute}. Summing the first two summands over all $i$ gives
$$[M / \{ e_1,\dotsc, e_{m-1} \}; \{ e_m \}] - [M;A],$$
so that we obtain
$$\sum_{i = 1}^m [M/\{ e_1,\dotsc, e_{i-1} \}; \{ e_i \}] - [M;A] = z(M; e_1, \dotsc, e_n) - z(M/A; e_{m + 1}, \dotsc, e_n) - [M;A]$$
after adding the sum $\sum_{i = 1}^{m-1} [M / \{ e_1, \dotsc, e_{i-1} \}; \{ e_i \}]$. Finally, we have 
$$\sum_{i = 1}^{m-1}[\!\left.(M\right|\!A) / \{ e_1, \dotsc, e_{i - 1} \}; \{ e_i \}] = z(\!\left.M\right|\!A; e_1,\dotsc,e_m),$$ which yields the claimed formula. 
\end{proof}
\begin{thm}\label{E20}
We have $E^2_{2,0} \cong \Z$.
\end{thm}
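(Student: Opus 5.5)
Two things need to be shown: that $\overline{x_0}$ generates $E_{2,0}^2$, and that it has infinite order.

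\textbf{Generation.} By the preceding lemma we may work only with proper flags. Let $x=\sum k_{[M;A]}[M;A]$ be a cycle built from proper flags. For each flag, Proposition \ref{class of flag in terms of z} rewrites $[M;A]$ modulo $\im(d_{3,0}^1)+\Z\cdot x_0$, giving
$$x \equiv \sum k_{[M;A]}\bigl(z(M)-z(\!\left.M\right|\!A)-z(M/A)\bigr) = -\,z\bigl(d_{2,0}^1(x)\bigr) = 0.$$
Here $z$ is extended linearly on $E_{1,0}^1$, and the middle expression equals $-z(d_{2,0}^1 x)$ because $d_{2,0}^1([M;A]) = [M/A]-[M]+[\!\left.M\right|\!A]$. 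One subtlety is that $z$ depends on the chosen orderings only up to $\Z\cdot x_0$ modulo $\im(d_{3,0}^1)$. So we fix one representative and one ordering per isomorphism class, which makes $z$ an honest homomorphism, and we absorb all discrepancies into $\Z\cdot x_0$. It follows that $x\in\im(d_{3,0}^1)+\Z\cdot x_0$, so $E_{2,0}^2$ is generated by $\overline{x_0}$.

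\textbf{Infinite order.} By analogy with the sign map in Lemma \ref{E11 sgn}, we want a homomorphism $\phi\colon E_{2,0}^1\to\Z$ that vanishes on $\im(d_{3,0}^1)$ and satisfies $\phi(x_0)\neq 0$. Restricted to cycles, $\phi$ then gives a surjection $E_{2,0}^2\to\Z$ up to finite index, forcing $E_{2,0}^2\cong\Z$. A natural first candidate is a bilinear pairing of the two $K_0$-invariants:
$$\phi([M;A]) = \rk(\!\left.M\right|\!A)\cdot\bigl(\lvert\widetilde{E}(M/A)\rvert-\rk(M/A)\bigr).$$
That is, the rank of the subobject times the nullity of the quotient, a cocycle in the spirit of the cup-product on $\HH\Z$-type invariants. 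It is a $2$-cocycle for the bar construction on the monoid $\Z^2$, so it vanishes on $d_{3,0}^1$. Concretely, write $a,b,c$ for the $K_0$-classes of $\left.M\right|\!A$, $\left.M\right|\!B/A$ and $M/B$. Then additivity of $\rk$ and of nullity under restriction and contraction turns $d_{3,0}^1([M;A,B])$ into $f(b,c)-f(a+b,c)+f(a,b+c)-f(a,b)$, which is zero for any bilinear $f$.

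Evaluating on $x_0$: for $A=\{f\}$, the restriction is $U_{1,1}$ with rank $1$ and the quotient is $U_{0,1}$ with nullity $1$, giving $1$. For $A=\{e\}$, the restriction has rank $0$, giving $0$. Hence $\phi(x_0)=1$, so $\overline{x_0}$ has infinite order and $E_{2,0}^2\cong\Z$.

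The main obstacle is finding the right detecting cocycle and checking that it really kills all of $\im(d_{3,0}^1)$, including the bookkeeping of nullity versus rank. If the bilinear pairing above fails, I would try the antisymmetric combination $\rk(\text{sub})\cdot\text{null}(\text{quot})-\text{null}(\text{sub})\cdot\rk(\text{quot})$. A secondary point is making the well-definedness of $z$ airtight, which is handled by fixing choices once.
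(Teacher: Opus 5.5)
Your proof is correct and follows the paper's route. Generation comes from Proposition~\ref{class of flag in terms of z} and linearity of $z$, and infinite order from a functional on $E_{2,0}^1$ that is bilinear in the $K_0$-classes of subobject and quotient.

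The only difference is cosmetic. The paper uses $\rk_M(A)\cdot\lvert\widetilde{E}(M)\setminus A\rvert$ and checks vanishing on $\im(d_{3,0}^1)$ by direct computation. You use rank times nullity of the quotient together with the general bilinear-cocycle identity. The two functionals differ by $\rk(\left.M\right|\!A)\cdot\rk(M/A)$, which is the coboundary of $[M]\mapsto\binom{\rk(M)}{2}$, so they agree on cycles and induce the same map on $E_{2,0}^2$.
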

\begin{proof}
As we have mentioned already, we claim that $\overline{x_0}$ generates the group $E_{2,0}^2$. To verify this, we let $\sum_{i = 1}^r k_i [M_i;A_i] \in \ker(d_{2,0}^1)$ be a cycle of proper flags. Then the previous proposition yields
\begin{align*}
\sum_{i = 1}^r k_i [M_i;A_i] & \equiv \sum_{i = 1}^r k_i (z(M_i) - z(\!\left.M_i\right|\!A_i) - z(M_i/A_i)) \\ & = -z\biggl( \sum_{i = 1}^r k_i ([M_i/A_i] - [M_i] + [\!\left.M_i\right|\!A_i]) \biggr) \\ & = -z\biggl( \sum_{i = 1}^r k_i d_{2,0}^1([M_i;A_i]) \biggr) \\ & = -z(0) \\ & = 0
\end{align*}
mod $\im(d_{3,0}^1) + \Z \cdot \,x_0$. In particular, we have $\sum_{i = 1}^r k_i [M_i;A_i] \equiv k \cdot x_0$ mod $\im(d_{3,0}^1)$ for a suitable $k \in \Z$. In other words, $\overline{x_0}$ generates the group $E_{2,0}^2$. It remains to show that $\overline{x_0}$ has infinite order. For this we define a group homomorphism 
$$\bigoplus_{[A \subset M]} \mathbb{Z} \cdot [M;A] \to \Z$$
by mapping a basis element $[M;A]$ to $\rk_M(A) \cdot \lvert \widetilde{E}(M) \setminus A \rvert$, and denote its restriction to $\ker(d_{2,0}^1)$ by $\varphi$. Since for any generator $[M; A,B] \in E_{3,0}^1$, we have
\begin{align*}
\varphi(d_{3,0}^1([M; A,B])) & = \varphi([M/A; B \setminus A] - [M; B] + [M; A] - [\left.M\right|\!B; A]) \\ & = \rk_{M / A}(B \setminus A) \cdot \lvert \widetilde{E}(M / A) \setminus (B \setminus A) \rvert - \rk_{M}(B) \cdot \lvert \widetilde{E}(M) \setminus B \rvert
\\ & + \rk_{M}(A) \cdot \lvert \widetilde{E}(M) \setminus A \rvert - \rk_{\!\left.M\right|B}(A) \cdot \lvert \widetilde{E}(\!\left.M\right|\!B) \setminus A \rvert \\
& = (\rk_M(B)-\rk_M(A)) \cdot \lvert \widetilde{E}(M) \setminus B \rvert - \rk_{M}(B) \cdot \lvert \widetilde{E}(M) \setminus B \rvert
\\ & + \rk_M(A) \cdot \lvert \widetilde{E}(M) \setminus A \rvert - \rk_M(A) \cdot \lvert B \setminus A \rvert \\
& = \rk_M(A)(\lvert \widetilde{E}(M) \setminus A \rvert - \lvert B \setminus A \rvert - \lvert \widetilde{E}(M) \setminus B \rvert) \\ 
& = 0,
\end{align*}
we get an induced homomorphism $\overline{\varphi} \colon E_{2,0}^2 \to \Z$. It maps $\overline{x_0}$ to
$$\rk_{U_{0,1} \oplus \, U_{1,1}}(\{ f \}) \cdot \lvert \widetilde{E}({U_{0,1} \oplus U_{1,1}}) \setminus \{ f \} \rvert - \rk_{U_{0,1} \oplus \, U_{1,1}}(\{ e \}) \cdot \lvert \widetilde{E}({U_{0,1} \oplus U_{1,1}}) \setminus \{ e \} \rvert = 1 \cdot 1 - 0 \cdot 1 = 1,$$
so that the order of $\overline{x_0}$ is infinite. This finishes the proof.
\end{proof}
\begin{rem}
Although we do not want to get into detail, let us quickly note how the homomorphism $\varphi$ used in the proof of Theorem \ref{E20} arises. There is a natural homomorphism which maps a flag to the class of its successive quotients in 
$$K_0(\Mat_*) \times K_0(\Mat_*) \subset \Z[K_0(\Mat_*) \times K_0(\Mat_*)].$$
Evaluating the cup product of the rank and reduced cardinality maps $K_0(\Mat_*) \to \Z$ viewed as representatives of elements in $H^1(K_0(\Mat_*))$ on these classes then yields $\varphi$ up to a sign; see the explicit formula at the top of page 110 of \cite{zbMATH03935317} for $p = 1 = q$.
\end{rem}
Based on the previous results, we have almost all the needed ingredients to compute the group $K_1(\Mat_*)$. To complete this computation, we will compare two short exact sequences, the first one being:
\begin{cor}\label{ses quotient Zmod2}
There is a short exact sequence of abelian groups of the form
\begin{center}
\begin{tikzcd}
0 \arrow[r] & \coker(d^2_{3,0} \colon E^2_{3,0} \to \mathbb{Z}/2\mathbb{Z}) \arrow[r] & H_2(X) \arrow[r] & \Z \arrow[r] & 0.
\end{tikzcd}
\end{center}
In particular, we have a splitting $H_2(X) \cong \coker(d^2_{3,0} \colon E^2_{3,0} \to \mathbb{Z}/2\mathbb{Z}) \oplus \Z$.
\end{cor}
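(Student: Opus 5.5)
We read off the total degree $2$ part of the geometric realization spectral sequence $E^1_{p,q} = H_q(BS_p\!\Mat_*) \Rightarrow H_{p+q}(X)$. It is a first-quadrant spectral sequence, so $H_2(X)$ has a filtration whose graded pieces are $E^\infty_{0,2}$, $E^\infty_{1,1}$ and $E^\infty_{2,0}$. Since $BS_0\!\Mat_*$ is a point, the column $p=0$ vanishes in positive degrees, so $E^1_{0,2}=0$ and hence $E^\infty_{0,2}=0$. The filtration then yields a short exact sequence
$$0 \to E^\infty_{1,1} \to H_2(X) \to E^\infty_{2,0} \to 0.$$

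First we identify $E^\infty_{1,1}$. Into $(1,1)$ the incoming differentials for $r \geq 2$ are $d^r \colon E^r_{1+r,2-r} \to E^r_{1,1}$. Only $r=2$ can be nonzero, namely $d^2_{3,0}\colon E^2_{3,0} \to E^2_{1,1}$; for $r \geq 3$ the source has negative $q$. The outgoing differentials land in $E^r_{1-r,r}$, which has $p<0$ and so vanishes. Therefore $E^\infty_{1,1} = E^3_{1,1} = \coker(d^2_{3,0})$. By Theorem \ref{E11}, $E^2_{1,1} \cong \Z/2\Z$, so this is exactly the cokernel appearing in the statement.

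Next we identify $E^\infty_{2,0}$. The outgoing differentials are $d^r\colon E^r_{2,0} \to E^r_{2-r,r-1}$. For $r=2$ the target is $E^2_{0,1}$, which vanishes because $E^1_{0,1}=H_1(\mathrm{pt})=0$. For $r\ge3$ the target has $p<0$. Incoming differentials come from $E^r_{2+r,1-r}$, which is zero for $r\geq 2$. Hence $E^\infty_{2,0}=E^2_{2,0}\cong\Z$ by Theorem \ref{E20}.

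Substituting these two identifications into the short exact sequence gives the claimed sequence. It splits because $\Z$ is free, which gives the direct sum decomposition.

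There is no real obstacle here. The only care needed is the bookkeeping of which differentials can hit or leave the positions $(1,1)$ and $(2,0)$, and remembering that column $p=0$ is trivial in positive degrees.
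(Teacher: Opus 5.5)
Your proposal is correct and matches the paper's proof essentially step for step. Both use the vanishing of the column $p=0$ in positive degrees to kill $E^\infty_{0,2}$ and $d^2_{2,0}$, identify $E^\infty_{1,1}$ with $\coker(d^2_{3,0})$ and $E^\infty_{2,0}$ with $E^2_{2,0}\cong\Z$ by degree bookkeeping, and read the sequence off the filtration of $H_2(X)$; your remark that the splitting comes from freeness of $\Z$ just states explicitly what the paper leaves implicit.
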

\begin{proof}
First note that the line $E_{0,q}^1$ vanishes for positive $q$ since $S_0\!\Mat_*$ is the terminal groupoid. In particular, we have $E^2_{0,2} = 0$ and hence $E^\infty_{0,2} = 0$. Moreover, we have $E^2_{1,1} \cong \mathbb{Z}/2\mathbb{Z}$ and $E^2_{2,0} \cong \Z$ by Theorem \ref{E11} and Theorem \ref{E20}. For degree reasons, no non-trivial differential can enter at the position $(2,0)$ for $r \geq 2$ and the only potentially non-trivial differential leaving this position for $r \geq 2$ is $d^2_{2,0} \colon E^2_{2,0} \to E^2_{0,1}$. As the line $p = 0$ vanishes for positive $q$, the differential $d^2_{2,0}$ is the zero map. Therefore, $E^{\infty}_{2,0} = E^2_{2,0} \cong \Z$. Analogously we obtain 
$$E^\infty_{1,1} = \coker(d^2_{3,0} \colon E^2_{3,0} \to E^2_{1,1}) = \coker(d^2_{3,0} \colon E^2_{3,0} \to \mathbb{Z}/2\mathbb{Z})$$
for degree reasons. This yields the filtration
$$0 = F_{-1}H_2(X) \subset F_0H_2(X) \subset F_1H_2(X) \subset F_2H_2(X) = H_2(X)$$
with subquotients given by:
\begin{enumerate}
\item[] $F_0H_2(X) = F_0H_2(X)/F_{-1}H_2(X) = E^\infty_{0,2} = 0$,
\item[] $F_1H_2(X) = F_1H_2(X)/F_0H_2(X) = E^{\infty}_{1,1} \cong \coker(d^2_{3,0} \colon E^2_{3,0} \to \mathbb{Z}/2\mathbb{Z})$,
\item[] $F_2H_2(X)/F_1H_2(X) = E^\infty_{2,0} \cong \Z$.
\end{enumerate}
Therefore, the short exact sequence
\begin{center}
\begin{tikzcd}
0 \arrow[r] & F_1H_2(X) \arrow[r, hookrightarrow] & F_2H_2(X) \arrow[r, twoheadrightarrow] & F_2H_2(X)/F_1H_2(X) \arrow[r] & 0
\end{tikzcd}
\end{center}
shows the claim.
\end{proof}
In other words, the second homology of the K-theory space is either $\Z$ or $\mathbb{Z}/2\mathbb{Z} \oplus \Z$. As we are interested in the K-groups, that is, in the homotopy of the K-theory space, we now want to compare this homology group with $\pi_2(X)$. 
\begin{prop}\label{ses inclusion Zmod2}
There is a short exact sequence of abelian groups of the form
\begin{center}
\begin{tikzcd}
0 \arrow[r] & \pi_2(X) \arrow[r] & H_2(X) \arrow[r] & \Z \arrow[r] & 0.
\end{tikzcd}
\end{center}
In particular, we have a splitting $H_2(X) \cong \pi_2(X) \oplus \Z$.
\end{prop}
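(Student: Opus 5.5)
We compare the homotopy and homology of $X$ via the Serre spectral sequence of its universal cover. By $K_0(\Mat_*) \cong \Z^2$ we have $\pi_1(X) \cong \Z^2$, and $X$ is connected. Let $p \colon \widetilde{X} \to X$ be the universal cover, a simply connected space with fibre the discrete set $\pi_1(X)$. Equivalently, we use the fibration $\widetilde{X} \to X \to B\pi_1(X) = K(\Z^2,1)$, the classifying map of the $1$-truncation. Its Serre spectral sequence is
$$E^2_{p,q} = H_p(B\Z^2; H_q(\widetilde{X})) \Rightarrow H_{p+q}(X).$$
Since $\widetilde{X}$ is simply connected, $H_1(\widetilde{X}) = 0$, and Hurewicz gives $H_2(\widetilde{X}) \cong \pi_2(\widetilde{X}) \cong \pi_2(X)$.

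Next we determine the relevant $E^2$-terms. The local coefficients in $H_2(\widetilde{X}) \cong \pi_2(X)$ carry the standard $\pi_1$-action on $\pi_2$. This action is trivial because $X$ is an $H$-space by Lemma \ref{Hspace}, so the coefficients are untwisted. Hence $E^2_{0,2} = H_0(B\Z^2;\pi_2(X)) = \pi_2(X)$. The row $q = 1$ vanishes, so $E^2_{1,1} = 0$. Finally $E^2_{2,0} = H_2(B\Z^2;\Z) = H_2(T^2;\Z) \cong \Z$.

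Then we check the differentials in total degree $2$ and $3$.
\begin{enumerate}
\item[(i)] The only differential that could hit $E_{0,2}$ is $d^3 \colon E^3_{3,0} \to E^3_{0,2}$. But $E^2_{3,0} = H_3(T^2;\Z) = 0$, so $d^3$ vanishes and $E^\infty_{0,2} = \pi_2(X)$.
\item[(ii)] The differential $d^2 \colon E^2_{2,0} \to E^2_{0,1}$ lands in a zero group.
\item[(iii)] No differentials leave $E_{0,2}$, and nothing else enters $E_{2,0}$.
\end{enumerate}
So $E^\infty_{2,0} = \Z$. The filtration on $H_2(X)$ then has subquotients $\pi_2(X)$, $0$ and $\Z$, which yields the exact sequence
$$0 \to \pi_2(X) \to H_2(X) \to \Z \to 0.$$
Here the first map is the Hurewicz map, since the edge map $H_2(\widetilde{X}) \to H_2(X)$ composed with Hurewicz is Hurewicz for $X$. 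The sequence splits because $\Z$ is free.

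The only delicate point is the triviality of the $\pi_1$-action on $\pi_2$, which is needed to identify $E^2_{0,2}$ with the untwisted coinvariants $\pi_2(X)$. Without it we would only obtain the coinvariants $\pi_2(X)_{\pi_1(X)}$. This is exactly where Lemma \ref{Hspace} enters: in an $H$-space, or any simple space, $\pi_1$ acts trivially on all higher homotopy groups. As an alternative we could use the Hopf exact sequence $H_2(X) \to H_2(\pi_1(X)) \to 0$ together with $\pi_2(X) \to H_2(X)$. The Serre spectral sequence argument above is more direct, however, and also gives injectivity.
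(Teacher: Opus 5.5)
Your proposal is correct and follows essentially the same route as the paper: you use the Serre spectral sequence of $\widetilde{X} \to X \to B\pi_1(X) \simeq T^2$, the vanishing of the $q=1$ row, $H_3(T^2)=0$ to kill $d^3_{3,0}$, $H_2(T^2)\cong\Z$, and Hurewicz together with the $H$-space structure from Lemma \ref{Hspace} to identify $E^\infty_{0,2}$ with $\pi_2(X)$. The only cosmetic difference is that you make the $\pi_1$-action trivial at the start, whereas the paper first identifies $F_0H_2(X)$ with the coinvariants $\pi_2(X)_{\pi_1(X)}$ and only removes the action at the end.
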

\begin{proof}
From obstruction theory we have the fiber sequence
\begin{center}
\begin{tikzcd}
\widetilde{X} \simeq X_{\geq 2} \arrow[r] & X \arrow[r] & X_{\leq 1} \simeq B\pi_1(X)
\end{tikzcd}
\end{center}
where $\widetilde{X}$ denotes the universal covering of $X$, and we can hence consider the associated homological Serre spectral sequence 
$$E_{p,q}^2 = H_p(B\pi_1(X);H_q(\widetilde{X})) \Rightarrow H_{p+q}(X).$$
To avoid confusion, let us explicitly state that the following $E_{p,q}^r$ terms refer to this spectral sequence. Part of its low-degree exact sequence is given by
\begin{center}
\begin{tikzcd}
H_1(B\pi_1(X); H_1(\widetilde{X})) \arrow[r] & H_2(X)/F_0H_2(X) \arrow[r] & H_2(B\pi_1(X)) \arrow[r] & H_0(B\pi_1(X); H_1(\widetilde{X})), 
\end{tikzcd}
\end{center}
but since $\widetilde{X}$ is simply connected, we have $H_1(\widetilde{X}) = 0$, so the outer two terms vanish. In particular, the map in the middle is an isomorphism.

Now we want to determine $F_0H_2(X) \cong E_{0,2}^\infty$. For this we first note that the line at $q = 1$ vanishes due to $H_1(\widetilde{X})$ being trivial. This immediately yields 
$$E_{0,2}^3 = \ker(d^2_{0,2})/\im(d^2_{2,1}) = E_{0,2}^2 = H_0(B\pi_1(X);H_2(\widetilde{X}))$$
and
$$E_{3,0}^3 = \ker(d^2_{3,0})/\im(d^2_{5,-1}) = H_3(B\pi_1(X)).$$
For degree reasons, no other potentially non-trivial differential enters or leaves the $(0,2)$-position on higher pages. Thus we get
$$F_0H_2(X) = E_{0,2}^\infty = \coker(d^3_{3,0}) = \coker(H_3(B\pi_1(X)) \to H_0(B\pi_1(X);H_2(\widetilde{X}))).$$
Now we splice this into the above exact sequence as follows:
\begin{center}
\begin{tikzcd}
H_3(B\pi_1(X)) \arrow[r] & H_0(B\pi_1(X);H_2(\widetilde{X})) \arrow[r] & H_2(X) \arrow[r] & H_2(B\pi_1(X)) \arrow[r] & 0 
\end{tikzcd}
\end{center}
By Theorem 6.3 of \cite{zbMATH07242437}, we know that $\pi_1(X) \cong \Z^2$ and hence that $B\pi_1(X) \simeq T^2$ is a torus. In particular, $H_3(B\pi_1(X)) = 0$ and $H_2(B\pi_1(X)) \cong \Z$. Next we use that the Hurewicz map is $\pi_1(X)$-equivariant, so that we obtain
$$H_0(B\pi_1(X);H_2(\widetilde{X})) \cong H_2(\widetilde{X})_{\pi_1(X)} \cong \pi_2(\widetilde{X})_{\pi_1(X)} \cong \pi_2(X)_{\pi_1(X)}.$$
Finally, $X$ becomes a connected $H$-space by Lemma \ref{Hspace}, so the action of $\pi_1(X)$ on $\pi_2(X)$ is trivial and we are done.
\end{proof}
With a tiny bit more care, the map $\pi_2(X) \to H_2(X)$ in the above short exact sequence can be seen to be the Hurewicz map, but we do not need this. Now we can put everything together and obtain the desired computation:
\begin{thm}\label{main result}
We have $K_1(\Mat_*) = \pi_2(X) \cong \mathbb{Z}/2\mathbb{Z}$.
\end{thm}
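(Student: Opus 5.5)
Comparing Corollary \ref{ses quotient Zmod2} with Proposition \ref{ses inclusion Zmod2} gives
$$\coker\bigl(d^2_{3,0} \colon E^2_{3,0} \to \mathbb{Z}/2\mathbb{Z}\bigr) \oplus \Z \;\cong\; H_2(X) \;\cong\; \pi_2(X) \oplus \Z .$$
Since $\coker(d^2_{3,0})$ is a quotient of $\mathbb{Z}/2\mathbb{Z}$, it is finite. Hence the torsion subgroup of $H_2(X)$ is this cokernel, and $\pi_2(X)$ is either $0$ or $\mathbb{Z}/2\mathbb{Z}$. To see this precisely, take torsion subgroups on both sides: the torsion of $\pi_2(X)$ equals the cokernel. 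Also $\pi_2(X)$ has torsion-free rank $0$, because both sides have total rank one. So $\pi_2(X)$ is finite and isomorphic to $\coker(d^2_{3,0})$.

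It therefore suffices to show that $\pi_2(X) = K_1(\Mat_*)$ is nonzero. I would avoid analysing $d^2_{3,0}$ directly and instead use the injection $\pi_1(\mathbb{S}) \hookrightarrow K_1(\Mat_*)$ from Theorem 6.4 of \cite{zbMATH07242437}, which was recalled at the start of this section. Since $\pi_1(\mathbb{S}) \cong \mathbb{Z}/2\mathbb{Z}$, this gives $K_1(\Mat_*) \neq 0$, and combined with the above, $K_1(\Mat_*) \cong \mathbb{Z}/2\mathbb{Z}$. As a consequence, $d^2_{3,0} = 0$, which could be recorded as a remark.

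The only delicate step is the group-theoretic comparison. One must make sure that an abstract isomorphism $A \oplus \Z \cong B \oplus \Z$ with $A$ finite forces $A \cong B$. This holds because $B$ has rank $0$ by counting ranks after tensoring with $\mathbb{Q}$, and is then all torsion, so it equals the torsion subgroup of the right-hand side, which is $A$. No compatibility of the two maps to $\Z$ is needed.

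If one preferred not to cite the sphere-spectrum injection, the main obstacle would be showing that $d^2_{3,0}$ vanishes. I would try to do this by constructing the forgetful map $\Mat_* \to \FinSet_*$ and comparing the two geometric realization spectral sequences, using that the class $[U_{2,2},(1\,2)]$ survives for $\FinSet_*$. However, the citation makes this unnecessary.
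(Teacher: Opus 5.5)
Your proposal is correct and follows the paper's proof: combine Corollary \ref{ses quotient Zmod2} with Proposition \ref{ses inclusion Zmod2}, cancel the copies of $\mathbb{Z}$, and use the injection $\pi_1(\mathbb{S}) \hookrightarrow K_1(\Mat_*)$ from \cite{zbMATH07242437} to rule out the trivial quotient. The only difference is cosmetic: you cancel $\mathbb{Z}$ by comparing ranks after tensoring with $\mathbb{Q}$ and then comparing torsion subgroups, whereas the paper cites Walker's cancellation theorem and also sketches a projection argument.
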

\begin{proof}
The previous proposition together with Corollary \ref{ses quotient Zmod2} gives us an isomorphism
$$\coker(d^2_{3,0} \colon E^2_{3,0} \to \mathbb{Z}/2\mathbb{Z}) \oplus \Z \cong H_2(X) \cong \pi_2(X) \oplus \Z.$$
By Walker's cancellation theorem \cite{zbMATH03119318} (or \cite{zbMATH03117594}), we can cancel the two copies of $\Z$, so $\pi_2(X)$ is a quotient of $\mathbb{Z}/2\mathbb{Z}$. Since $\pi_1(\mathbb{S}) \cong \mathbb{Z}/2\mathbb{Z}$ injects into $\pi_2(X)$ by Theorem 6.4 of \cite{zbMATH07242437}, we hence obtain $\pi_2(X) \cong \mathbb{Z}/2\mathbb{Z}$ as claimed.
\end{proof}
Since one of the groups is finite, we could have also shown directly why the copies of $\Z$ can be cancelled. Indeed, if $f \colon A \oplus \Z \to B \oplus \Z$ is an isomorphism of abelian groups with $A$ finite, then the composition with the projection $B \oplus \Z \to \Z$ must annihilate the torsion group $A \oplus 0$. Since the composite is surjective, it needs to be multiplication by $\pm 1$ on the $\Z$ component, so that $A \oplus 0$ is the kernel of this map. As this kernel also agrees with $f^{-1}(B \oplus 0)$, $f$ restricts to an isomorphism $A \to B$, which is what we wanted to show.

\hfill
 
Now that we have answered the question convering $K_1(\Mat_*)$ in Remark 5.6 of \cite{zbMATH08208138}, let us comment on what we expect from the higher K-theory groups. Based on the computation of $E_{1,1}^2$ (where we showed that we can increase the rank of a matroid without changing its class), we believe that the K-theory of pointed matroids does not really differ much from that of pointed finite sets. More precisely:
\begin{conj}\label{conj}
The map $\operatorname{K}(\Mat_*) \to \HH\!\Z \vee \, \mathbb{S}$ induced by the rank map and by the forgetful map to pointed finite sets is an equivalence of spectra. In particular, we have $K_n(\Mat_*) \cong \pi_n(\mathbb{S})$ for all positive $n$.
\end{conj}
Based on Theorem 6.3 of \cite{zbMATH07242437} and on this article, this conjecture is currently known at the level of homotopy groups in degrees $0$ and $1$.
\bibliographystyle{alpha}
\bibliography{references}

\end{document}